\documentclass{amsart}

\usepackage{amsmath}
\usepackage{amssymb}
\usepackage{amsthm}
\usepackage[a4paper]{geometry}

\usepackage{tikz}
\usepackage{tikz-cd}
\usepackage{tikz-3dplot}
\usetikzlibrary{shapes.geometric}
\usetikzlibrary{intersections}
\usetikzlibrary{calc,angles,positioning,intersections,quotes,decorations.markings}

\usepackage{mathrsfs}

\usepackage[
backend=biber,
style=alphabetic,
maxalphanames = 4,
maxbibnames = 99,
sorting=ynt
]{biblatex}
\usepackage{subcaption} 

\usepackage{hyperref}

\usepackage{subfiles}

\theoremstyle{plain}
\newtheorem{definition}{Definition}[section]
\newtheorem{theorem}[definition]{Theorem}
\newtheorem{lemma}[definition]{Lemma}

\theoremstyle{definition}

\newcommand{\Z}{\mathbb{Z}}
\newcommand{\R}{\mathbb{R}}

\newcommand{\G}{\mathcal{G}} 
\newcommand{\B}{\mathcal{B}} 

\newcommand{\A}{\mathcal{A}}
\newcommand{\RP}{\mathbb{RP}}

\newcommand{\GG}{\mathcal{G}}
\newcommand{\Swap}{\mathcal{S}}
\newcommand{\sln}{\mathfrak{sl}}

\DeclareMathOperator{\PSL}{PSL}

\DeclareMathOperator{\Tr}{Tr}
\DeclareMathOperator{\Id}{Id}

\newcommand{\comment}[1]{}
\DeclareMathOperator{\Hom}{Hom}

\title{The Structure of the Ghost and Swapping Algebras}
\author{Caleb Ashley, Ming Hong Tee}
\address{Caleb Ashley, Department of Mathematics, Boston College,
Chestnut Hill, Ma 02467}
\thanks{Ashley's research is supported by NSF grant DMS-2316685}
\address{Ming Hong Tee, Department of Mathematics, Boston College,
Chestnut Hill, Ma 02467}
\date{}

\begin{document}
\maketitle
\begin{abstract}
    The swapping algebras and the ghost algebras were introduced in the papers of Labourie and Bridgeman--Labourie to encode the Poisson structure on spaces of length functions and generalized cross-ratio functions associated to $\Theta$-Anosov representations. They used these algebras to show that correlation functions are stable under the Poisson bracket associated to Atiyah-Bott-Goldman's symplectic form and that length functions are convex along the Hamiltonian flow of other length functions. In this paper, we study the algebraic relationship between these two algebras. In particular, we show that the kernel of a natural surjection from the ghost algebra onto the extended swapping algebra is generated by five families of relations. Consequently, we obtain an isomorphism from an explicit quotient of the ghost algebra onto the extended swapping algebra.
\end{abstract}

\section{Introduction}
The swapping algebra and the ghost algebra were introduced in \cite{labourie2018goldman,bridgeman2023ghost}, respectively. Both algebras are utilized in the study of families of representations of discrete groups into Lie groups. Many natural functions arise on spaces of such representations, for example, in the case of surface groups, the lengths of closed geodesics or the traces of holonomies, which have Poisson brackets given by combinatorial formulas involving the intersections of curves (see Goldman \cite{Goldman1986}). The swapping algebra and ghost algebra encode such formulas in higher rank. In this section, we provide context for these algebras and thus situate our main result (Theorem \ref{thm: relationship between G and S}).  

\subsection{Background}
Given a closed Riemann surface $S$ of genus $g \ge 2$, the Teichm\"{u}ller space of $S$, consisting of marked complex structures on $S$, corresponds via uniformization, to the space of discrete faithful representations of $\pi_1(S)$, the fundamental group of $S$, into the real projective special linear group $\PSL(2,\R)$, up to inner automorphism. Work of Goldman \cite{Goldman1988} identifies the Teichm\"{u}ller space of $S$ with a particular connected component of the character variety $\Hom(\pi_1(S), \PSL(2,\R))/\PSL(2,\R)$. More generally, one can consider the character variety of $\pi_1(S)$ into a higher-rank Lie group $G$, such as $\PSL(n,\R)$ for $n\geq 3$. Analyzing to what extent elements of the classical theory of $\PSL(2,\R)$  persist, and what new phenomena arise in higher rank, constitutes what is known as higher Teichm\"{u}ller theory.

Goldman \cite{goldman1984symplectic} introduced a natural symplectic form on the smooth loci of varieties associated to representations of surface groups into a reductive Lie group $G$. The symplectic form is used to study the geometry of the corresponding deformation spaces. Furthermore, he showed that in the case of $\PSL(2,\R)$, the restriction of this symplectic form to the Teichm\"{u}ller component is in fact a constant multiple of the Weil--Petersson symplectic form.

Given a symplectic form on a manifold, it is natural to compute the Hamiltonian flows of real-valued smooth functions. In the case of Teichm\"{u}ller space, an important class of functions consists of the length and twist functions of curves on the surface. Gardiner \cite{gardiner1975schiffer} and Wolpert \cite{wolpert1982fenchel,wolpert1983symplectic,wolpert1985weil} showed that the Hamiltonian flow of the length function of an essential simple closed curve corresponds to the infinitesimal twist vector field along that curve.

One might ask what the Hamiltonian flows of important families of functions are when $\PSL(2,\R)$ is replaced by a higher-rank Lie group. To make sense of this question, we first need a space on which these functions are defined. A well-known example is the Hitchin component \cite{Hitchin1992}. For $\PSL(n,\R)$, it is the connected component of the character variety that contains the composition of a Fuchsian representation from $\pi_1(S)\to \PSL(2,\R)$ with the irreducible representation $\PSL(2,\R)\to\PSL(n,\R)$. When $n=3$, Goldman \cite{Goldman1990} and Choi--Goldman \cite{ChoiGoldman1993} verified that the Hitchin component parameterizes the convex real projective structures on $S$. Labourie \cite{labourie2006anosov} introduced Anosov representations and showed that Hitchin representations are Anosov. Following that, Guichard and Wienhard \cite{guichard_wienhard_2012} extended this notion to representations of word hyperbolic groups into a semisimple Lie groups. These representations are now called $\Theta$-Anosov representations. The space we consider lies in a more general setting. It is the universal $\Theta$-Anosov space of Bridgeman and Labourie \cite{bridgeman2023ghost}, which contains the spaces of $\Theta$-Anosov representations of all closed surfaces.

For $\Theta$-Anosov representations of closed surface groups, as with the classical Teichm\"{u}ller space, important classes of functions are the $\Theta$-length functions. These functions are essential for defining the pressure form \cite{bcls2015pressure} (a generalization of the Weil--Petersson metric) and for establishing marked length spectrum rigidity results \cite{bcls2015pressure,BridgemanCanaryLabourie2020}. Given such length functions, Bridgeman and Labourie \cite{bridgeman2023ghost} showed how to compute their Poisson brackets on the space of Anosov representations. In fact, analogously to the universal Teichm\"{u}ller space, they introduced uniformly hyperbolic bundles, which play the role of a universal space for the Anosov representations of all closed surface groups. Length functions are not defined in this universal setting; however, correlation functions, which are closely related to length functions, are.   

To explain the notion of correlation functions and how they are related to length functions, we examine the case of $\Theta$-Anosov representations of a surface group $\pi_1(S)$ into $\PSL(n,\R)$, where $\Theta = \{1\}$. In this case, these representations are also called projective Anosov representations. For each nontrivial $\gamma \in \pi_1(S)$, the projective length is given by
\begin{align*}
    L_\gamma(\rho) = \log(\Lambda_1(A)),
\end{align*}
where $\rho$ is a projective Anosov representation into $\PSL(n,\R)$, $A = \rho(\gamma)$, and $\Lambda_1(A)$ is the largest modulus of an eigenvalue of $A$. Note that this definition depends on the surface group. We compute the derivative of $L_\gamma$ along a smooth path of projective Anosov representations $\rho_s$ with $\rho_0 = \rho$. Let $X = A^{-1}A' \in \sln(n,\R)$, where $A' = \frac{d}{ds}\rho_s(\gamma)|_{s=0}$ and $\rho' = \frac{d}{ds}\rho_s|_{s=0}$. Then, a computation gives; 
\begin{align} \label{eqn: derivative of length}
    (dL_\gamma)_\rho(\rho') = \Tr(P_1(A)X) = \Tr\left(\left(P_1(A)-\frac{\Id}{n}\right)X\right),
\end{align}
where $P_1(A)$ is the projector whose image is the eigenline $E$ of $A$ for the eigenvalue with largest modulus, and whose kernel is the sum $E^*$ of the generalized eigenspaces for the remaining eigenvalues. Since the limit maps corresponding to $\rho$ are maps, 
$$(\zeta,\chi):\partial_\infty \pi_1(S) = S^1 \to \RP^{n-1}\times (\RP^{n-1})^*,$$ 
with the properties that $\zeta(\gamma^+) = E$ and $\chi(\gamma^-) = E^*$, where $\gamma^{\pm}$ are the attracting and repelling fixed points of $\gamma$ in $\partial_\infty \pi_1(S)$, the projector $P_1(A)$, and hence the derivative of $L_\gamma$, can be written in terms of the limit maps.

Equation (\ref{eqn: derivative of length}) shows that the function $P_1(A)-\frac{\Id}{n}$ is dual to the differential of $L_\gamma$ under the trace form. Therefore, for $\gamma'\in \pi_1(S)$, the variation of $L_\gamma$ along the dual of $L_{\gamma'}$ is
\begin{align} \label{eqn: bracket}
    \Tr\left(\left(P_1(A)-\frac{\Id}{n}\right)\left(P_1(B)-\frac{\Id}{n}\right)\right) = \Tr\left(P_1(A)P_1(B)\right) - \frac{1}{n},
\end{align}
where $B= \rho(\gamma')$. Hence, the derivative of $L_\gamma$ along the dual of $L_\gamma'$ is related to the function $\Tr\left(P_1(A)P_1(B)\right)$. The function $\Tr\left(P_1(A)P_1(B)\right)$ is an example of a correlation function, which in general is defined as the trace of a product of projectors \cite{labourie2018goldman,bridgeman2023ghost}. For a general correlation function $T_G = \Tr(P_1(A_1)\dots P_1(A_k))$, with $A_i = \rho(\gamma_i)$, the variation gives
\begin{align} \label{eqn: variation of correlation function}
    T_G' = \sum_{i=1}^k \Tr(P_1(A_1)\dots P_1'(A_i)\dots P_1(A_k)),
\end{align}
which shows that the variation of $T_G$ is determined by the variations of projectors. Bridgeman and Labourie gave a formal description of the variation of projectors. With it they reorganized Equation (\ref{eqn: variation of correlation function}) and called the outcome ''ghost integration" along the configuration $G=\lceil g_1,\dots,g_k\rceil$ where $g_i$ corresponds to a lift of $\gamma_i$. Furthermore, they defined the variation of a correlation function along a vector field associated to a projector or a correlation function. This procedure is encoded by the ghost algebra.

Uniformly hyperbolic bundles are trivial vector bundles over the unit tangent bundle of the hyperbolic plane, equipped with a connection (which gives a flow) and a metric, that satisfy conditions similar to those defining Anosov representations \cite{bridgeman2023ghost}. These bundles can be defined without reference to a group and come with limit maps. Hence, every oriented geodesic $g$ in the hyperbolic plane determines a projector, with image $\zeta(g^+)$ and kernel $\chi(g^-)$, and therefore correlation functions are defined in this setting.

In the periodic case, by comparing with Goldman's computation of Hamiltonian flows of invariant functions \cite{Goldman1986}, Bridgeman and Labourie computed the variation of length or correlation function along the Hamiltonian flow of another such function, using the tools above \cite{bridgeman2023ghost}. In particular, they showed the Poisson stability of the class of length and correlation functions on the spaces of Anosov representations of closed surface groups, i.e. the Poisson bracket of two functions that are length or correlation functions is a combination of correlation functions. Also, in analogy with Kerckhoff's convexity theorem \cite{kerckhoff1983nielsen} and Wolpert's sine formula for length functions in classical Teichm\"{u}ller theory \cite{wolpert1983symplectic}, they established the convexity of length functions along the Hamiltonian flows of length functions for positively ratioed projective Anosov representations, which were introduced in \cite{martone2019positively}.

The purpose of the ghost algebra is to compute the variation of correlation functions in the universal setting of uniformly hyperbolic bundles. This leads to a method for computing the Poisson brackets of such functions in the periodic case and leads to the proofs of the two stated results. For the second result, in the projective case, they showed that the ghost algebra maps onto an extension of the swapping algebra, a Poisson algebra first introduced in \cite{labourie2018goldman}, and used the swapping algebra to directly compute the second derivatives of length functions.

A natural question is the following: what is the relationship between these two algebras? In the projective case, Bridgeman and Labourie showed that there is a surjective homomorphism $\pi$ from the ghost algebra onto the extended swapping algebra that preserves the brackets. Also, they remarked that the bilinear bracket in the ghost algebra does not satisfy the Jacobi identity in general, and hence is not a Poisson algebra. In this paper, we establish explicit relations in the ghost algebra that generate the kernel of $\pi$.

\subsection{Statement of the main result}
We start by introducing the two central objects of this paper. Let $\A$ be the real polynomial algebra generated by $\langle x,y \rangle \in S^1\times S^1$ quotient by the ideal generated by $\langle x,x \rangle$ for all $x\in S^1$. Let $\GG = S^1\times S^1-\Delta$, which represents the space of oriented geodesics $g=\langle g^+,g^-\rangle$ in the hyperbolic plane. If $g_1,g_2 \in \GG$ and $g_1$ intersects $g_2$ transversely, we define $\epsilon(g_1,g_2)$ to be their sign intersection. This function $\epsilon$ then extends to arbitrary pairs of elements of $S^1\times S^1$, for instance pair that intersect only at their endpoints, disjoint or pairs involving a degenerate geodesic $\langle x,x\rangle$ (see \cite{bridgeman2023ghost} for more details). The algebra $\A$, equipped with the bracket,
$$[\langle x,y \rangle, \langle z,w \rangle] = \epsilon(\langle z,w \rangle,\langle x,y \rangle)\langle x,w \rangle\langle z,y \rangle,$$
forms a Poisson algebra, which we call the \emph{swapping algebra} \cite{labourie2018goldman}. Since $\A$ is isomorphic as a ring to $\R[\GG]$ and $\R[\GG]$ is an integral domain, we can define $F(\A)$ to be the field of fractions of $\A$. For $\vec{Z} = (Z_1,\dots,Z_n) \in (S^1)^n$, $\vec{z} = (z_1,\dots,z_n) \in (S^1)^n$ and $\sigma\in S_n$, with $Z_i \neq z_i$ for all $i$, the \emph{multifraction} $[\vec{Z},\vec{z};\sigma]$ is the element
\begin{align*}
    [\vec{Z},\vec{z};\sigma] = \prod_{i=1}^n\frac{\langle Z_i,z_{\sigma(i)}\rangle}{\langle Z_i,z_i \rangle} \in F(\A).
\end{align*}
Let $\A^*$ be the subalgebra of $F(\A)$ generated by the multifractions. The \emph{extended swapping algebra} $\Swap$ is the real polynomial algebra generated by $c, l_g$ and $\A^*$ for all $g\in \GG$. Here, $c$ is an additional formal generator and is declared to be central for the bracket. If we set $L_\GG = \{l_g\mid g\in \GG\}$, then we have $\Swap = \A^*[L_\GG,c]$ is an $\R[\GG,c]$-module, where $g\in \GG$ acts by multiplication by $l_g$. The bracket on $\A^*$ extends to $\Swap$ by the Leibniz rule and
\begin{align*}
    [l_g,l_h] &= \frac{1}{gh}[g,h] - \epsilon(h,g)c,  \\
    [l_g,H] &= \frac{[g,H]}{g}, \\
    [c,\cdot] &= 0,
\end{align*}
for $H \in \A^*$.

For all $k\geq2$, let $C_k$ be the cyclic group of order $k$ and $\hat{\GG}^k = (\GG^k-\Delta_k)/C_k$, where $\Delta_k = \{(g,\dots,g)\in\GG^k\}$ and $C_k$ acts by cyclically permuting the indices. We denote an element of $\hat{\GG}^k$ by $\lceil g_1,\dots,g_k \rceil$ and set $\hat{\GG}^* = \bigcup_{k\geq 2} \hat{\GG}^k$. For each $G\in \hat{\GG}^*$, one associates a ghost polygon, which is a collection of elements of $S^1\times S^1$ (see \cite{bridgeman2023ghost}). The \emph{ghost algebra} $\B$ is the real polynomial algebra generated by $\hat{\GG}^*$, $\GG$ and $c$, where $c$ is again a central formal element. The ghost bracket is a bilinear operator satisfying
\begin{align*}
    [g,h] &= \epsilon(h,g)(\lceil g,h\rceil - c), \\
    [g,H] &= \sum_{i} (-1)^{i+1} \epsilon(\tau_i,g) \lceil g,\tau_i^* \rceil, \\
    [G,H] &= \sum_{i,j} (-1)^{i+j} \epsilon(\tau_i,\theta_j) \lceil \theta_j^*,\tau_i^* \rceil, \\
    [c,\cdot] &= 0,
\end{align*}
where $G, H\in \hat\GG^*$, $\theta_j$ and $\tau_i$ are the elements of the ghost polygons associated to $G$ and $H$, respectively, and $\theta_j^*,\tau_i^*$ are the dual tuples of geodesics (see \cite{bridgeman2023ghost}).

The map $\pi:\B \to \Swap$ defined by $c\mapsto c$, $g\mapsto l_g$, and
$$\lceil g_1,\dots,g_n\rceil \mapsto \prod_{i=1}^n \frac{\langle g_i^+,g_{i+1}^- \rangle}{\langle g_i^+,g_i^-\rangle},$$
is an algebra map that preserves the bracket \cite{bridgeman2023ghost}.

To ease the notation in the proofs, we choose to represent elements of $\hat{\G}^*$ as $\lceil \langle X,Y \rangle,\dots, \langle S,T\rangle  \rceil$. Consider the ideal $I$ in $\B$ generated by the following elements:
\begin{enumerate}
    \item $\lceil \langle X,Y\rangle,\langle T,X\rangle\rceil$ \label{rel 1}
    \item $\lceil \langle X,Y\rangle,\langle X,S \rangle\rceil -1$ \label{rel 2}
    \item $\lceil \langle X,Y\rangle,\dots,\langle X,S\rangle  \rceil - \lceil \dots,\langle X,S\rangle\rceil $ \label{rel 3}
    \item $\lceil \langle X,Y\rangle,\langle T,Y\rangle, \dots  \rceil - \lceil \langle T,Y\rangle,\dots \rceil $ \label{rel 4}
    \item $\lceil \langle X,Y\rangle,\dots\rceil \lceil \langle X,S\rangle,\langle T,Y \rangle \rceil - \lceil \langle X,S\rangle,\dots,\langle T,Y \rangle\rceil$ \label{rel 5}
\end{enumerate}
where $``\dots"$ represents at least one element of $\G$, and distinct letters represent distinct elements of $S^1$.
   
\begin{theorem} \label{thm: relationship between G and S}
    The kernel of $\pi$ is the ideal $I$. In particular, $\B/I$ is isomorphic to $\Swap$.
\end{theorem}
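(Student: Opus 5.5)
The inclusion $I\subset\ker\pi$ is a direct check on the five generators, using only $\langle x,x\rangle=0$ in $\A$ and cancellation in $F(\A)$. For (1), the image contains the numerator factor $\langle X,X\rangle=0$. For (2), the image is $\frac{\langle X,S\rangle\langle X,Y\rangle}{\langle X,Y\rangle\langle X,S\rangle}=1$. For (3), write the cycle as $\lceil g_1,\dots,g_n\rceil$ with $g_1=\langle X,Y\rangle$ and $g_n=\langle X,S\rangle$. The two factors involving $g_1$ and $g_n$ multiply to $\frac{\langle X,Y\rangle}{\langle X,S\rangle}\cdot\frac{\langle X,g_2^-\rangle}{\langle X,Y\rangle}=\frac{\langle X,g_2^-\rangle}{\langle X,S\rangle}$. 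This is exactly the factor of $g_n$ in the shortened cycle $\lceil g_2,\dots,g_n\rceil$. Relation (4) is symmetric, with the cancellation now in the second coordinate. For (5), the product of the images of the two cycles is the image of the merged cycle: the factors $\langle X,\cdot\rangle$ and $\langle\cdot,Y\rangle$ cancel crosswise, just as in (3).

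For the reverse inclusion, I first strip off the free variables. Since $\Swap=\A^*[L_\GG,c]$ is a polynomial ring over $\A^*$ and $\pi$ sends $g\mapsto l_g$ and $c\mapsto c$ bijectively onto the variables, $\B\cong \R[\hat\GG^*]\otimes\R[\GG,c]$ and $\ker\pi=\ker(\pi_0)\otimes\R[\GG,c]$. Here $\pi_0:\R[\hat\GG^*]\to\A^*$ is the restriction of $\pi$. The generators of $I$ lie in $\R[\hat\GG^*]$, so it suffices to show $\ker\pi_0\subset I_0:=I\cap\R[\hat\GG^*]$.

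Next I use that $\A$ is the polynomial ring $\R[\GG]$ once the generators $\langle x,x\rangle$ are set to zero. Hence every monomial $M=G_1\cdots G_r$ in cycles maps either to $0$ or to $\pm$ a single Laurent monomial $\mu(M)$ in the variables $\langle x,y\rangle$, $x\neq y$, and distinct Laurent monomials are linearly independent in $F(\A)$. Therefore $\ker\pi_0$ is spanned by two kinds of elements: monomials $M$ with $\pi_0(M)=0$, and differences $M-M'$ with $\mu(M)=\mu(M')$.

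The core of the proof is a normal-form reduction modulo $I_0$. Call a monomial \emph{reduced} if no numerator factor $\langle g_i^+,g_{i+1}^-\rangle$ of any of its cycles coincides with a denominator factor $\langle h^+,h^-\rangle$ of any cycle in it. If $\pi_0(M)=0$, some numerator is degenerate, i.e.\ $g_i^+=g_{i+1}^-$, and I expect to use (3)–(5) to shrink or merge cycles until a 2-cycle of type (1) appears as a factor, which shows $M\in I_0$. If a cancellation occurs inside one cycle, it forces a shared endpoint between $g_i$ and $g_{i+1}$ or wrap-around neighbours, and (2)–(4) remove it. If it occurs between two cycles, relation (5), applied after possibly shortening the second cycle to a 2-cycle with (3),(4), merges them. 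Each step lowers the total length or the number of cancellations, so every monomial is congruent mod $I_0$ to $0$, to $1$, or to a reduced monomial with the same $\mu$.

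For reduced monomials, $\mu(M)$ determines the multiset of denominators, i.e.\ the geodesics $g$ occurring, and the multiset of numerators $\langle g_i^+,g_{i+1}^-\rangle$. This data is a bipartite matching that determines the cyclic successor structure, up to choices when several geodesics share an endpoint.

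The main obstacle is this last uniqueness and confluence step. Two reduced monomials with the same $\mu$ can differ by reattaching successors among geodesics with a common endpoint $X$, that is, by exchanging which $\langle X,\cdot\rangle$ follows which. I would prove that any such exchange is realized by a sequence of applications of (5), splitting or merging cycles at the shared endpoint, together with (3),(4). The plan is to work with the permutation $\sigma$ of the multifraction $[\vec Z,\vec z;\sigma]$ representing $\mu(M)$, and to show that transpositions swapping two entries with equal $Z$-coordinates, or equal $z$-coordinates, are exactly what relation (5) implements. Then $M\equiv M'\pmod{I_0}$ whenever $\mu(M)=\mu(M')$, which finishes the proof.
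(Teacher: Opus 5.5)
Your check that $I\subseteq\ker\pi$ and your reduction to $\pi_0$ are fine. The case $\pi_0(M)=0$, however, cannot be closed with the five families as stated, because the inclusion $\ker\pi\subseteq I$ is then false.

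For a cycle $G=\lceil g_1,\dots,g_n\rceil$, let $d(G)$ be the number of indices $i$ (mod $n$) with $g_i^+=g_{i+1}^-$. The relations behave as follows under $d$:
\begin{itemize}
\item In (3), and likewise in (4), the two cycles have the same $d$.
\item In (5), the cycles $\lceil\langle X,Y\rangle,\dots\rceil$ and $\lceil\langle X,S\rangle,\dots,\langle T,Y\rangle\rceil$ have the same $d$, and $\lceil\langle X,S\rangle,\langle T,Y\rangle\rceil$ has $d=0$ because $X\neq Y$ and $T\neq S$.
\item The cycle in (2) has $d=0$, and the cycle in (1) has $d=1$ because $T\neq Y$.
\end{itemize}
So the algebra map $\phi:\B\to\R$ with $\phi(G)=0$ if $d(G)=1$, $\phi(G)=1$ otherwise, and $\phi(g)=\phi(c)=0$, kills $I$. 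But $u=\lceil\langle X,Y\rangle,\langle Y,X\rangle\rceil$ has $d(u)=2$, so $\phi(u)=1$, while $\pi(u)=\frac{\langle X,X\rangle\langle Y,Y\rangle}{\langle X,Y\rangle\langle Y,X\rangle}=0$. Hence $u\in\ker\pi\setminus I$.

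This is exactly where your plan to ``shrink or merge until a type (1) 2-cycle appears'' breaks. Relations (3)--(5) never change $d$ of the cycle carrying the degenerate junctions, and they only split off 2-cycles with $d=0$. So a type (1) factor is reachable only from cycles with exactly one degenerate junction. Allowing $T=Y$ in (1) alone does not help: then $\phi(G)=0$ for $d(G)\in\{1,2\}$ still kills $I$, and $\lceil\langle A,B\rangle,\langle C,A\rangle,\langle B,C\rangle\rceil$, with $d=3$, is a witness.

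The paper's proof has the same hole. Its lemma on generators of $\B/I$ expands an arbitrary cycle as $\prod_i\lceil\langle Z_i,b\rangle,\langle a,z_{i+1}\rangle\rceil\lceil\langle Z_i,z_i\rangle,\langle a,b\rangle\rceil$ by the argument of the independence lemma. But the merges by (5) there take $T=Z_i$, $S=z_{i+1}$, so they need $Z_i\neq z_{i+1}$ at all junctions but one, and $\phi$ shows the expansion fails for $u$. The relations must be strengthened before either argument can go through. For instance, allowing $T=S$ in (5), after using (3) to make the degenerate junction non-backtracking, puts every cycle with a degenerate junction into $I$.

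Separately, for $\mu(M)=\mu(M')\neq 0$ your argument is a plan rather than a proof. The confluence claim is the whole content of the inclusion in this case, and you never check that reattaching successors at a shared endpoint is realized by (5). Since (5) needs four distinct endpoints, coincidences force detours through auxiliary points. Your reduction step is also inaccurate. A numerator $\langle g_i^+,g_{i+1}^-\rangle$ can cancel a denominator $g_j$ even when no two adjacent geodesics share an endpoint; take $g_4=\langle g_1^+,g_2^-\rangle$ in an otherwise generic 5-cycle. In that situation (2)--(4) do not apply.

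The paper sidesteps confluence entirely. It defines $\Psi$ on the $\R[\GG,c]$-basis of reduced balanced fractions of monomials using a generic auxiliary geodesic $\langle a,b\rangle$. It then proves independence of $\langle a,b\rangle$ and multiplicativity with (2)--(5), leaving only a check of $\Psi\circ\hat\pi$ on generators of $\B/I$. That is the more economical route for the nondegenerate part, and it isolates the degenerate cycles as precisely the point where the relations need strengthening.
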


\subsection{Acknowledgments}
Both authors thank Professor Bridgeman for introducing them to this mathematical work and for many enlightening discussions. 

\section{Ideas for the Proof}
\subsection{Heuristic for the generators of $I$}
We begin with a heuristic explanation of the generators of $I$. Recall that $\Swap$ contains elements of the form $[\vec{Z},\vec{z};\sigma]$, where $\vec{Z},\vec{z} \in (S^1)^n$ and $\sigma\in S_n$. To such an element $[\vec{Z},\vec{z};\sigma]$, we associate a labeled directed graph with vertex set $\{Z_1,\dots,Z_n\}\cup\{z_1,\dots,z_n\} \subset S^1$, directed visible edges $\{(Z_i,z_i)\}_{i=1}^n$ with label $1$, and directed ghost edges $\{(Z_i,z_{\sigma(i)})\}_{i=1}^n$ with label $-1$. If some ghost edge $(Z_i,z_{\sigma(i)})$ has $Z_i = z_{\sigma(i)}$, we assign the empty graph, which corresponds to the zero element of $\Swap$ (this comes from the relation $\langle x,x\rangle = 0$ in the definition of $\A$). This explains the first relation, that is, the first family of generators of $I$ (see Figure \ref{fig:rel 1}):
\begin{align*}
    \pi(\lceil\langle X,Y\rangle,\langle T,X\rangle\rceil) = \frac{\langle X,X\rangle\langle T,Y\rangle}{\langle X,Y\rangle\langle T,X\rangle} = 0.
\end{align*}
\begin{figure}[htbp]
    \centering
    \begin{subfigure}[b]{0.45\textwidth}
        \centering
        \includegraphics[width=\textwidth]{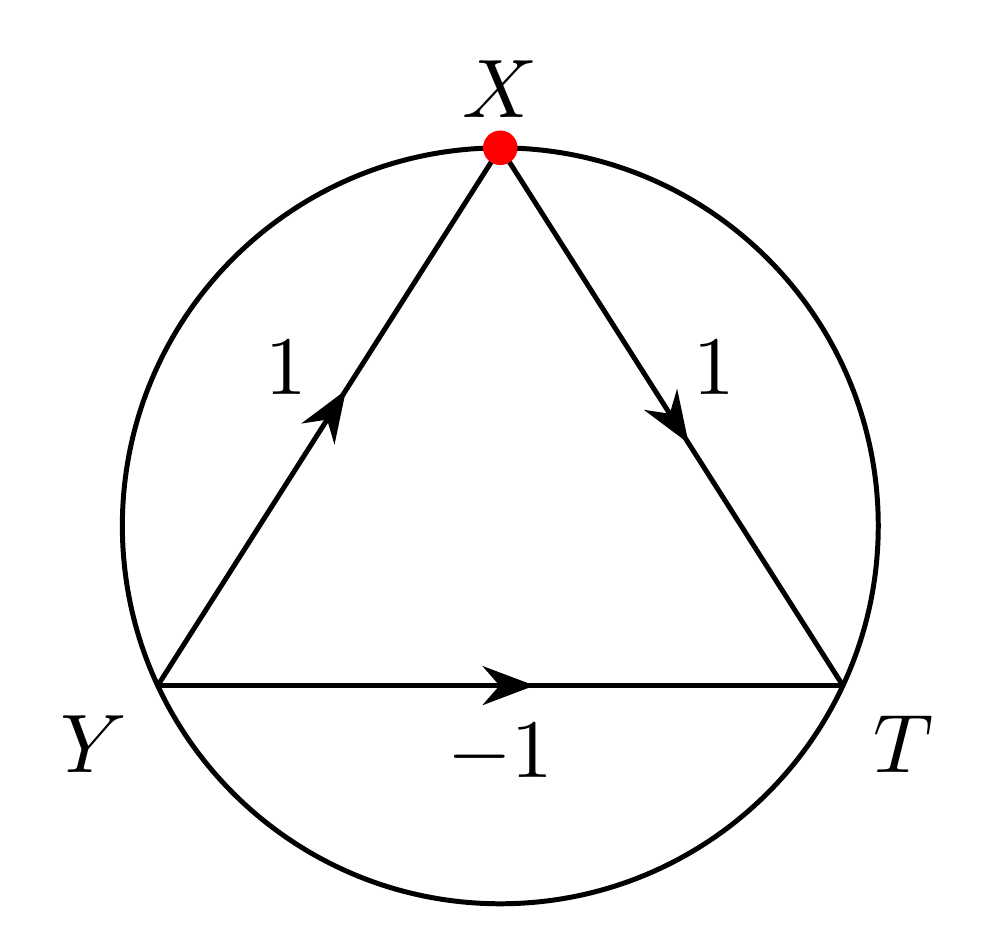}
        \caption{Relation 1}
        \label{fig:rel 1}
    \end{subfigure}
    \hfill 
    \begin{subfigure}[b]{0.45\textwidth}
        \centering
        \includegraphics[width=\textwidth]{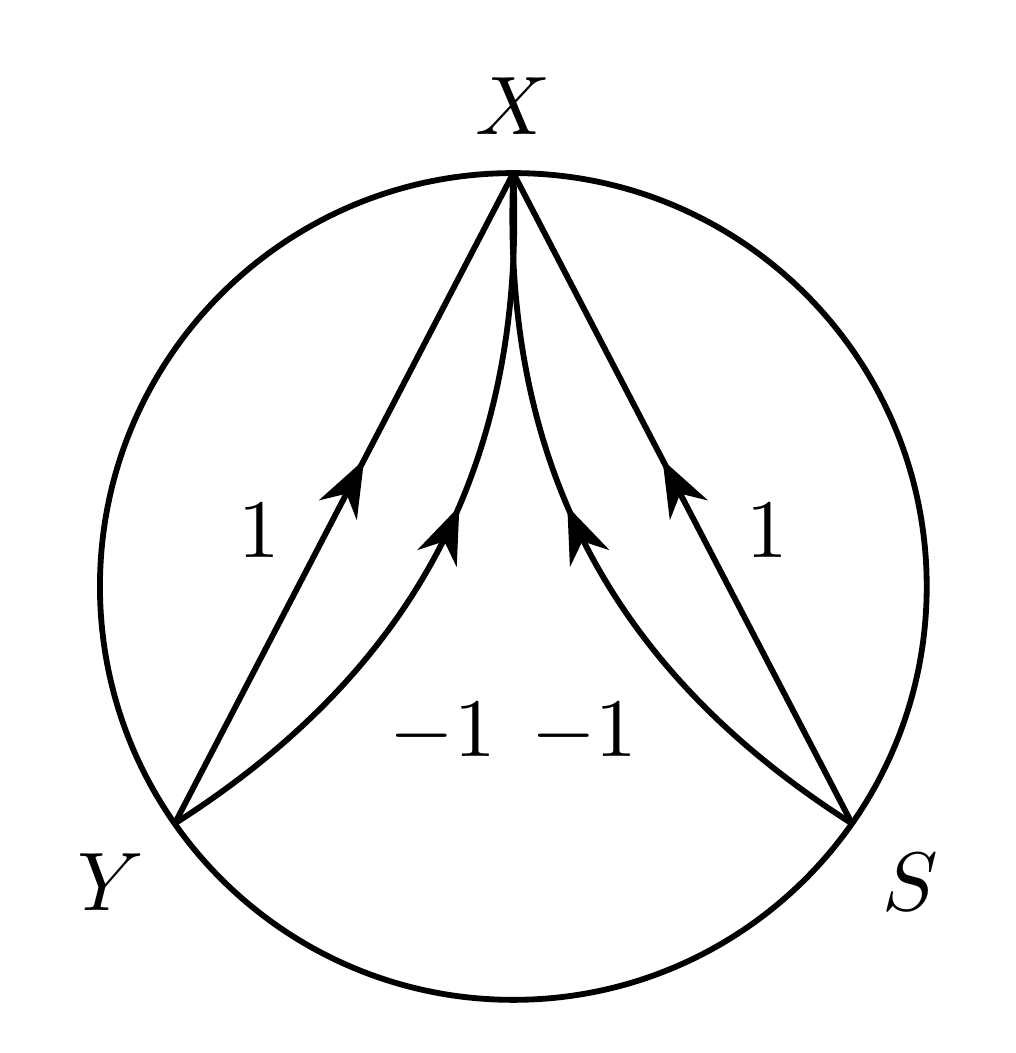}
        \caption{Relation 2}
        \label{fig:rel 2}
    \end{subfigure}
   
    \caption{The graphs associated to the generators of relations 1 and 2. For the graph on the left, the ghost edge $\langle X,X\rangle$ degenerates to the red point.}
    \label{fig:rel 1 and 2}
\end{figure}

If every ghost edge has distinct endpoints, we merge identical directed edges together and relabel each merged edge by the sum of their labels. This operation reflects a property of the field of fractions $F(\A)$: any two elements $[\vec{Z},\vec{z};\sigma]$ and $[\vec{W},\vec{w};\tau]$ with the same resulting graph are equal in $\Swap$. Relation \ref{rel 2}, illustrated in Figure \ref{fig:rel 2}, is constructed based on this collapsing property: both pairs of edges cancel, and 
\begin{align*}
    \pi(\lceil\langle X,Y\rangle,\langle X,S\rangle\rceil) = \frac{\langle X,S\rangle\langle X,Y\rangle}{\langle X,Y\rangle\langle X,S\rangle} = 1.
\end{align*}

For the cancellation relations, namely relations $3$, $4$ and $5$, Figure \ref{fig:cancellation rel} shows that the corresponding graphs collapse. Furthermore, we see that relations $3$ and $4$ are ``mirror images" of one another. By the definition of $\pi$, the cancellations in Figure \ref{fig:cancellation rel} read
\begin{align*}
    \pi(\lceil \langle X,Y\rangle,\langle U,T\rangle,\langle X,S\rangle\rceil) &= \frac{\langle X,T\rangle\langle U,S\rangle\langle X,Y\rangle}{\langle X,Y\rangle\langle U,T\rangle\langle X,S\rangle} = \frac{\langle X,T\rangle\langle U,S\rangle}{\langle U,T\rangle\langle X,S\rangle} = \pi(\lceil \langle U,T\rangle,\langle X,S\rangle\rceil), \\
    \pi(\lceil \langle X,Y\rangle,\langle T,Y\rangle,\langle U,S\rangle\rceil) &= 
    \frac{\langle X,Y\rangle\langle T,S\rangle\langle U,Y\rangle}{\langle X,Y\rangle\langle T,Y\rangle\langle U,S\rangle}=
    \frac{\langle T,S\rangle\langle U,Y\rangle}{\langle T,Y\rangle\langle U,S\rangle} =
    \pi(\lceil \langle T,Y\rangle,\langle U,S\rangle\rceil),
\end{align*}
and 
\begin{align*}
    \pi(\lceil \langle X,Y\rangle,\langle B,A\rangle\rceil\lceil \langle X,S\rangle,\langle T,Y\rangle\rceil) &= 
    \frac{\langle X,A\rangle\langle B,Y\rangle}{\langle X,Y\rangle\langle B,A\rangle}
    \frac{\langle X,Y\rangle\langle T,S\rangle}{\langle X,S\rangle\langle T,Y\rangle} \\
    &= 
    \frac{\langle X,A\rangle\langle B,Y\rangle\langle T,S\rangle}{\langle X,S\rangle\langle B,A\rangle\langle T,Y\rangle}=
    \pi(\lceil \langle X,S\rangle,\langle B,A\rangle,\langle T,Y \rangle\rceil).
\end{align*}
The same cancellations show that every generator of type \ref{rel 3}, \ref{rel 4} or \ref{rel 5} lies in $\ker \pi$.
\begin{figure}[htbp]
    \centering
    \begin{subfigure}[b]{0.45\textwidth}
        \centering
        \includegraphics[width=\textwidth]{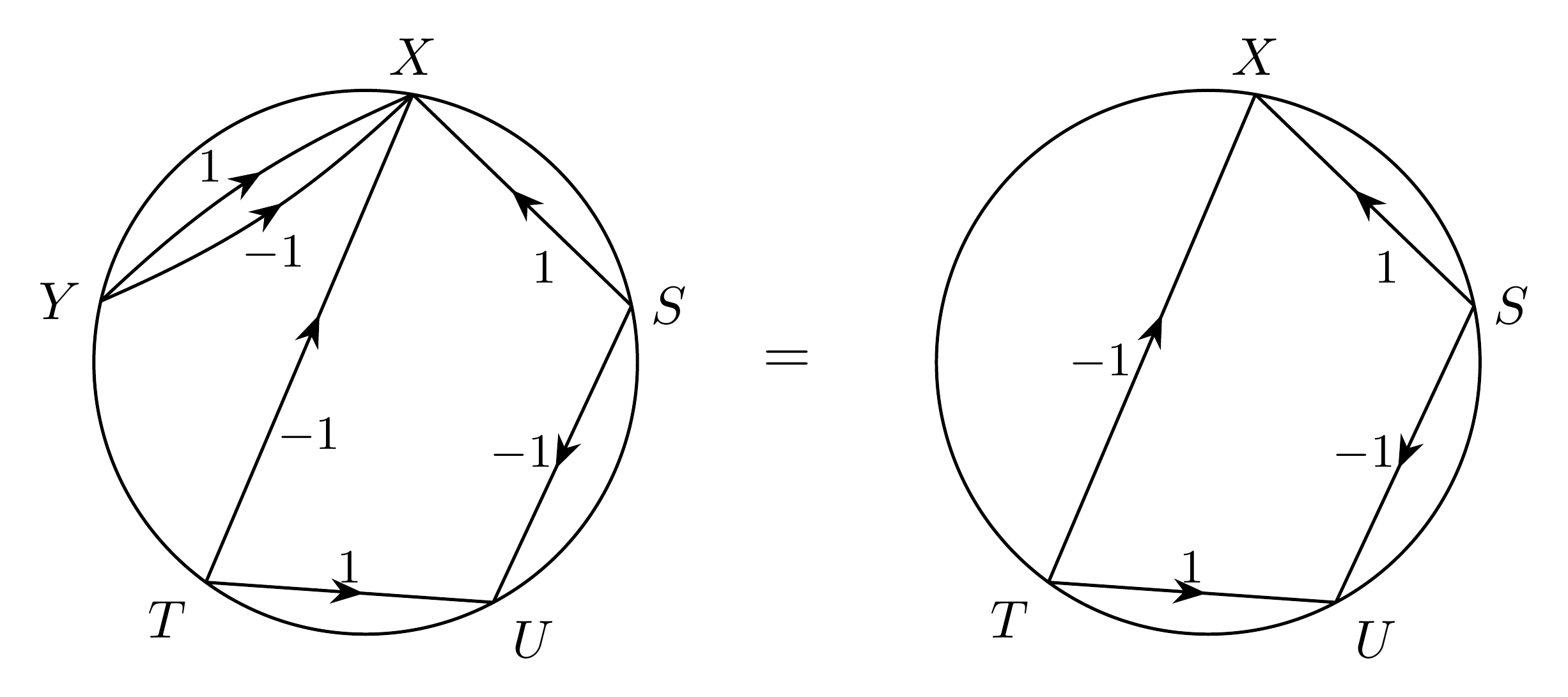}
        \caption{Relation 3}
        \label{fig:rel 3}
    \end{subfigure}
    \hfill 
    \begin{subfigure}[b]{0.45\textwidth}
        \centering
        \includegraphics[width=\textwidth]{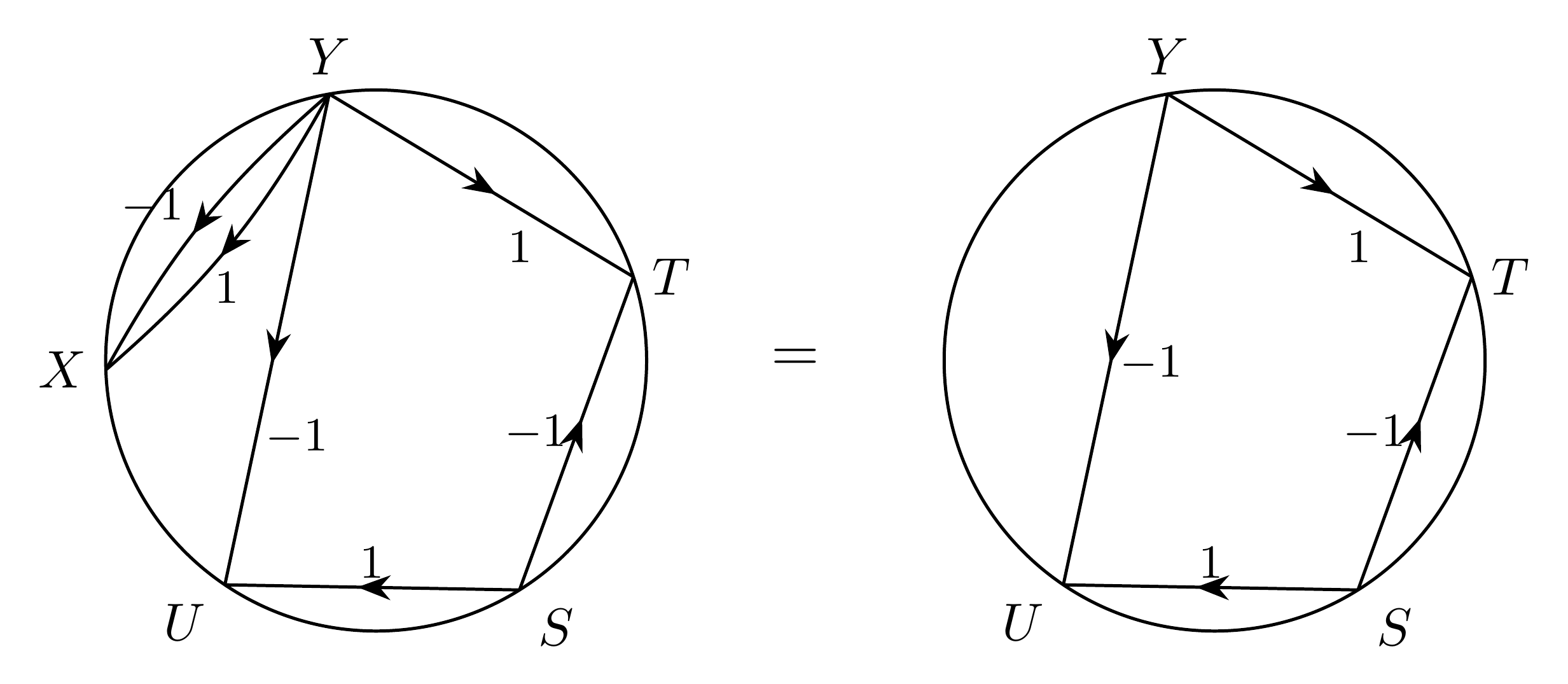}
        \caption{Relation 4}
        \label{fig:rel 4}
    \end{subfigure}
   
    \par\bigskip 
   
    \begin{subfigure}[b]{0.45\textwidth}
        \centering
        \includegraphics[width=\textwidth]{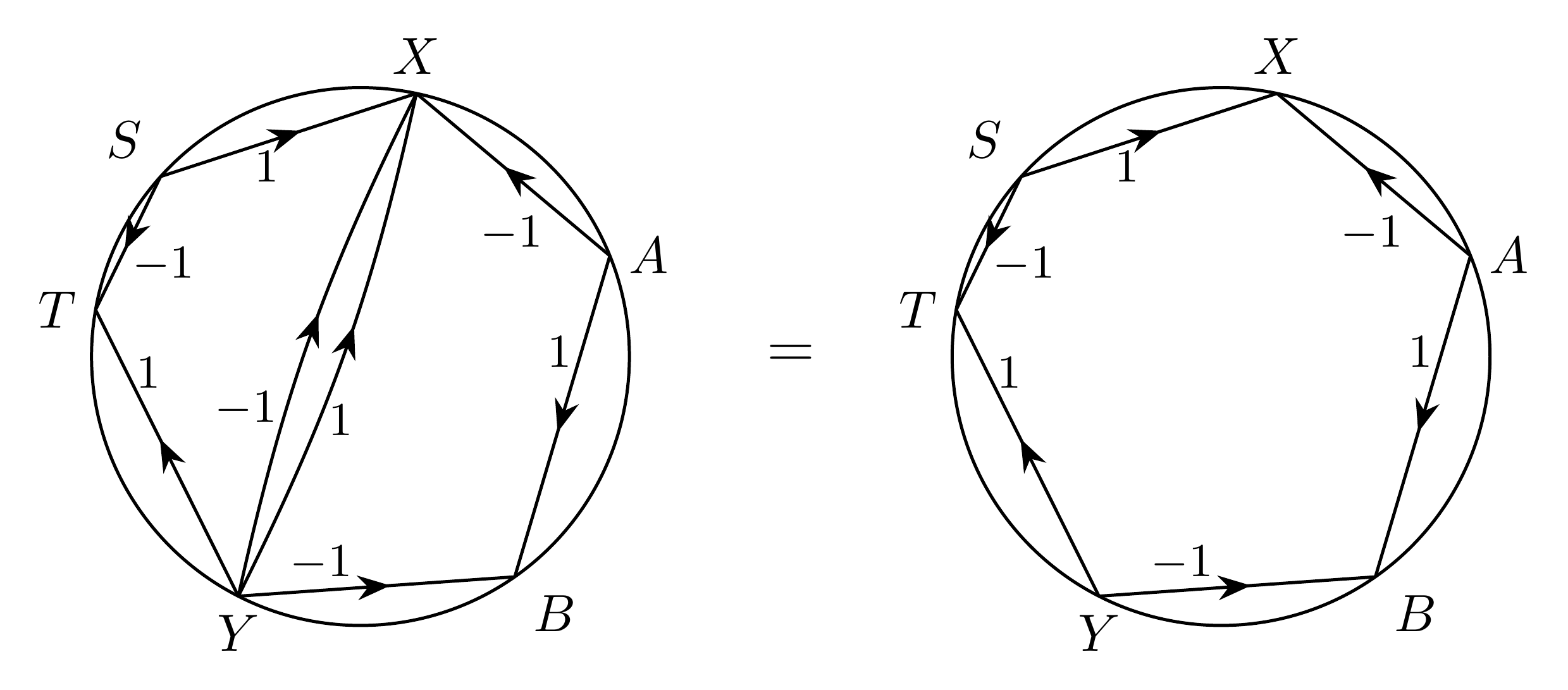}
        \caption{Relation 5}
        \label{fig:rel 5}
    \end{subfigure}
   
    \caption{The graph associated to the generators of relations $3$, $4$ and $5$.}
    \label{fig:cancellation rel}
\end{figure}

\subsection{Strategy of the proof} \label{subsection: strategy}
We now describe the structure of the proof. Note that $I\subseteq \ker \pi$, which induces a surjective map $\hat\pi:\B/I\to \Swap$. The main aim is to construct a well-defined $\R$-algebra map $\Psi$ from $\Swap$ to $\B/I$ that is the inverse of $\hat\pi$. In Lemma \ref{lem: basis for Swap},  we consider an $\R[\GG,c]$-basis $T$ for $\Swap$, consisting of the nonzero multifractions (Lemma \ref{lem: ppty of multifraction}). As with rational numbers, we represent each element of $T$ in lowest terms (equivalently, by its reduced labeled directed graph). Since $T$ is a basis, any map from $T$ to $\B/I$ extends uniquely to an $\R[\GG,c]$-module map from $\Swap$ to $\B/I$.

For a cross-ratio $\frac{\langle X,T\rangle\langle S,Y\rangle}{\langle X,Y\rangle\langle S,T\rangle} \in \Swap,$ the corresponding element in $\B/I$ is $\lceil \langle X,Y\rangle,\langle S,T\rangle \rceil$. To construct the required map $\psi:T\to \B/I$ in general, we consider in Lemma \ref{lem:presentation of T} the cross-ratio representation of elements in $T$, that is, a canonical decomposition of each element in $T$ into a product of cross-ratios (see Figure \ref{fig:diagram_decomp}). However, this decomposition depends on an auxiliary choice of a generic geodesic $\langle a,b\rangle\in \G$. In Lemma \ref{lem: independent of a,b}, we show that the function $\psi$ is independent of this auxiliary choice.

\begin{figure}[htbp]
    \centering
        \includegraphics[width=0.8\textwidth]{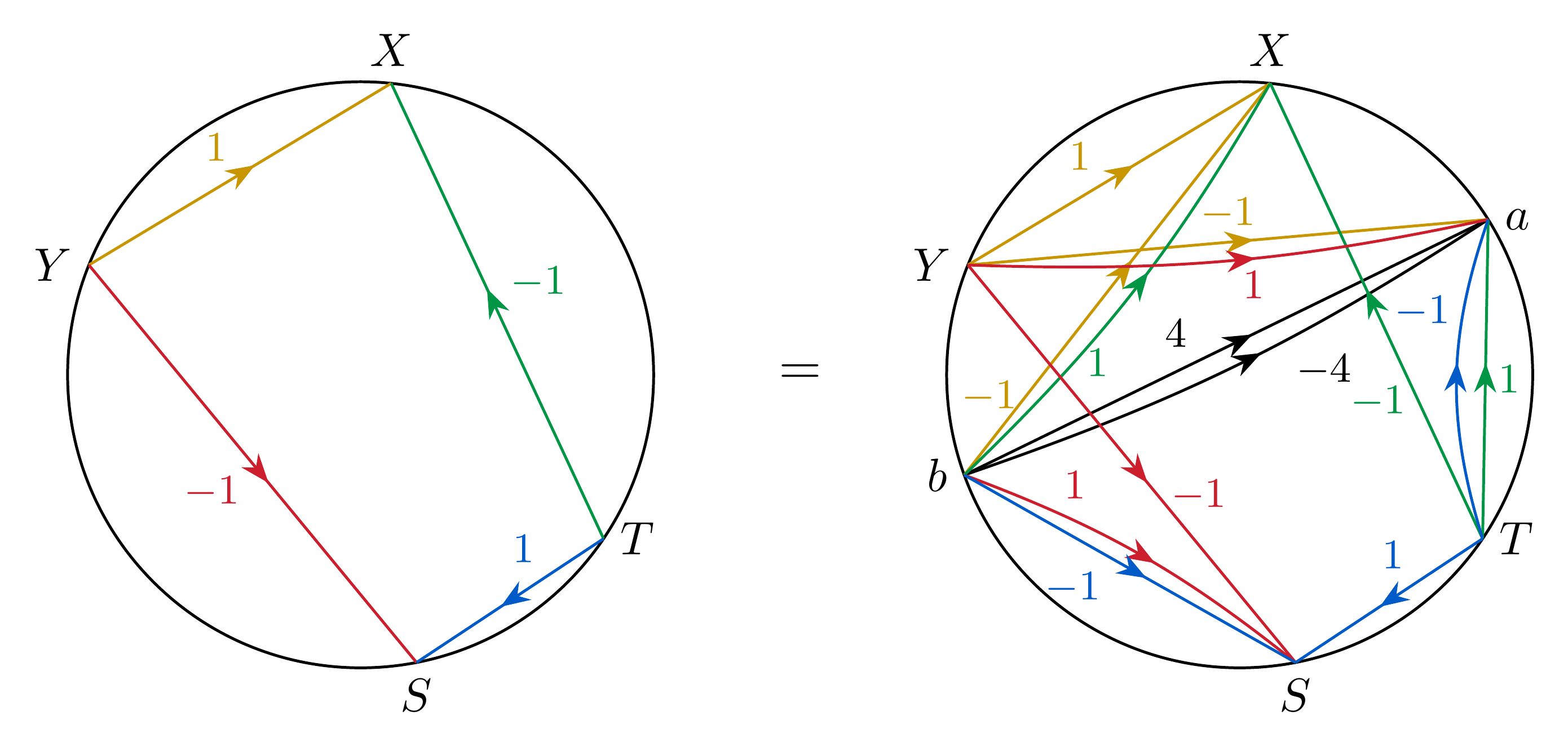}
        \caption{The decomposition of $\frac{\langle X,T\rangle \langle S,Y \rangle}{\langle X,Y \rangle \langle S,T\rangle}$ into four cross-ratios $\frac{\langle X,b\rangle \langle a,Y \rangle}{\langle X,Y \rangle \langle a,b\rangle}$, $\frac{\langle a,T\rangle \langle S,b \rangle}{\langle a,b \rangle \langle S,T\rangle}$, $\frac{\langle X,T\rangle \langle a,b \rangle}{\langle X,b \rangle \langle a,T\rangle}$, and  $\frac{\langle a,b\rangle \langle S,Y \rangle}{\langle a,Y \rangle \langle S,b\rangle}$.}
        \label{fig:diagram_decomp}
\end{figure}

We remark that there exist $[\vec{Z},\vec{z};\sigma] = [\vec{W},\vec{w};\tau]$ with $(\vec{Z},\vec{z},\sigma) \neq (\vec{W},\vec{w},\tau)$. Hence, a multifraction need not be expressed uniquely in terms of the data $(\vec{Z},\vec{z},\sigma)$. One could construct a similar map $\psi$ using the data $(\vec{Z},\vec{z},\sigma)$, but verifying that $\psi$ is well-defined in this context would be cumbersome.

By the universal property of free modules, we can extend $\psi$ to an $\R[\GG,c]$-module homomorphism $\Psi:\Swap\to\B/I$. In Lemma \ref{lem: algebra map}, we verify that $\Psi$ is an $\R$-algebra map. Finally, in Lemma \ref{lem: generators}, we show that the cross-ratios, together with $\G$ and $c$, generate $\Swap$ and $\B/I$ as $\R$-algebras in their respective contexts. The theorem follows by verifying that $\Psi$ is the inverse of $\hat\pi$ on these generators.

\section{Proof of Main Theorem}
We now give a proof of the main theorem. As described in Section \ref{subsection: strategy}, we will first identify a canonical basis for $\Swap$. We begin with the basic properties of multifractions. 

Recall from the previous section, that $\A$ is the ring $\R[S^1\times S^1]/J$, where $J$ is the ideal generated by $\langle x,x\rangle$ for all $x\in S^1$. Note that the ring $\A$ is isomorphic as a ring to $\R[\GG]$. A \emph{fraction of monomials} in $F(\A)$ is an element of the form $F = \prod_{x\neq y} \langle x,y\rangle^{e(x,y)}$, where $e$ is a finitely supported function on $\GG = S^1\times S^1-\Delta$ with values in $\Z$. Note that every fraction of monomials can be expressed uniquely as $F = 1$ or $F = N/D$ in lowest terms, where $N$ and $D$ are monomials without a common factor. We call such an expression reduced. A \emph{balanced} fraction of monomials is one with 
\begin{align*}
    \sum_{y\in S^1} e(x,y) = 0 \qquad \text{ and } \qquad \sum_{x\in S^1} e(x,y) = 0
\end{align*}
for all $x,y\in S^1$.

\begin{lemma} \label{lem: ppty of multifraction}
    \begin{enumerate}
        \item The product of two multifractions is a multifraction. Hence, $\A^*$ is spanned over $\R$ by the multifractions.
        \item Every nonzero multifraction is a balanced fraction of monomials.
        \item Let $F=N/D$ be a balanced fraction of monomials in its reduced expression, with $D = \prod_{i=1}^n \langle Y_i,y_i \rangle$, listing factors with multiplicity. Then there exists $\sigma \in S_n$ such that $N = \prod_{i=1}^n \langle Y_i,y_{\sigma(i)}\rangle$, and hence $F = [\vec{Y},\vec{y};\sigma]$.  
    \end{enumerate}
\end{lemma}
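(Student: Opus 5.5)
For part (1), I will multiply the defining products directly. Given $[\vec{Z},\vec{z};\sigma]$ with $n$ entries and $[\vec{W},\vec{w};\tau]$ with $m$ entries, concatenate the data into $\vec{Z}\vec{W}\in(S^1)^{n+m}$ and $\vec{z}\vec{w}\in(S^1)^{n+m}$. Take the permutation $\sigma\sqcup\tau\in S_{n+m}$, which acts as $\sigma$ on the first $n$ indices and as $\tau$ (shifted by $n$) on the last $m$. Each factor $\langle Z_i,z_{\sigma(i)}\rangle/\langle Z_i,z_i\rangle$ or $\langle W_j,w_{\tau(j)}\rangle/\langle W_j,w_j\rangle$ reappears unchanged, so the product is $[\vec{Z}\vec{W},\vec{z}\vec{w};\sigma\sqcup\tau]$, and the condition $Z_i\neq z_i$ is preserved. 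Hence the $\R$-span of the multifractions is closed under multiplication. It contains $1=[\vec Z,\vec z;\mathrm{id}]$, so this span is a subalgebra containing the generators, and it is therefore all of $\A^*$.

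For part (2), nonzero means no factor $\langle Z_i,z_{\sigma(i)}\rangle$ has $Z_i=z_{\sigma(i)}$. Under this condition the multifraction is already a fraction of monomials, with exponent function
\[
e=\sum_i \delta_{(Z_i,z_{\sigma(i)})}-\sum_i\delta_{(Z_i,z_i)}.
\]
For fixed $x$, the row sum $\sum_y e(x,y)$ is $\#\{i:Z_i=x\}-\#\{i:Z_i=x\}=0$. For fixed $y$, the column sum is $\#\{i:z_{\sigma(i)}=y\}-\#\{i:z_i=y\}=0$, because $\sigma$ is a bijection.

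For part (3), note that in the reduced expression $N/D$ the multiset of factors of $N$ is $e^+$ and the multiset of factors of $D$ is $e^-$, where $e^{\pm}$ are the positive and negative parts of $e$. Balance implies that, for each $x$, the number of factors of $N$ with first entry $x$ equals the number of factors of $D$ with first entry $x$. The same holds for second entries. Write $N=\prod_{j=1}^{n'}\langle U_j,u_j\rangle$; comparing total degrees gives $n'=n$. By the row condition, I can reorder the factors of $N$ so that $U_i=Y_i$ for every $i$. The column condition then says that the multiset $\{u_i\}$ equals the multiset $\{y_i\}$. So there is a bijection $\sigma$ with $u_i=y_{\sigma(i)}$, which can be constructed by matching equal values one by one. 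This gives $N=\prod_i\langle Y_i,y_{\sigma(i)}\rangle$.

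Finally, I check that $Y_i\neq y_i$: this holds because $D$ is a product of elements of $\GG$. Therefore $F=[\vec Y,\vec y;\sigma]$. The only delicate point is this simultaneous bookkeeping of row and column multiplicities. It is resolved by first fixing the order of the factors of $N$ using the first coordinates and only then choosing $\sigma$ from the second coordinates.
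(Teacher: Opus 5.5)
Your proposal is correct and follows essentially the same route as the paper. In (1) you concatenate the data with the block permutation $\sigma\oplus\tau$; in (2) you count first- and second-slot multiplicities; and in (3) you match the first- and second-coordinate multisets of $N$ and $D$, where reordering the factors of $N$ and then choosing $\sigma$ is exactly the paper's $\sigma=\tau\circ\lambda^{-1}$. Your extra remarks, that $1$ is a multifraction so the span is a unital subalgebra and that $Y_i\neq y_i$ because $D$ has factors in $\GG$, are small details the paper leaves implicit.
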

\begin{proof}
    \begin{enumerate}
        \item Let $[\vec{Z},\vec{z};\sigma], [\vec{W},\vec{w};\tau]$ be multifractions in $\A^*$. Then $[\vec{Z},\vec{z};\sigma][\vec{W},\vec{w};\tau] = [\vec{Z}\oplus \vec{W},\vec{z}\oplus\vec{w};\sigma\oplus \tau],$
        where 
        \begin{align*}
            \vec{Z}\oplus \vec{W} = (Z_1,\dots,Z_n,W_1,\dots,W_m), \qquad \vec{z}\oplus \vec{w} = (z_1,\dots,z_n,w_1,\dots,w_m),
        \end{align*} and $\sigma\oplus \tau\in S_{n+m}$ is given by $$(\sigma\oplus \tau)(i) = \begin{cases}
            \sigma(i) &, i\leq n, \\
            \tau(i-n)+n &, i>n.
        \end{cases}$$
        \item Fix $p \in S^1$. The number of factors with $p$ in their first slot is given by $|\{i\mid Z_i = p\}|$, both in the numerator and in the denominator. The number of factors with $p$ in their second slot is given by $|\{i\mid z_i = p\}|$ in the denominator and $|\{i|z_{\sigma(i)}=p\}|$ in the numerator. Since $\sigma$ is a permutation, the balance condition is satisfied.
        \item Write $N = \prod_{i=1}^m \langle X_i,x_i\rangle$ with multiplicity. Since $N/D$ is the reduced expression, the positive and negative parts of $e$ are exactly the exponents of $N$ and $D$, respectively. The balance condition implies that $(X_1,\dots,X_m) = (Y_1,\dots,Y_n)$ and $(x_1,\dots,x_m) = (y_1,\dots,y_n)$ as multisets. Hence, $m = n$. Choose permutations $\lambda,\tau \in S_n$ such that $X_i = Y_{\lambda(i)}$ and $x_i = y_{\tau(i)}$. For $\sigma = \tau\circ\lambda^{-1}$,
        \begin{align*}
            \prod_{i=1}^n\langle Y_i,y_{\sigma(i)}\rangle = \prod_{i=1}^n\langle Y_{\lambda(i)},y_{\tau(i)}\rangle = \prod_{i=1}^n \langle X_i,x_i\rangle = N, 
        \end{align*}
        which implies $F = [\vec{Y},\vec{y};\sigma]$.
    \end{enumerate}
\end{proof}
In view of the preceding lemma, we also call an expression $[\vec{Z},\vec{z};\sigma]$ reduced if $\prod_{i=1}^n\frac{\langle Z_i,z_{
\sigma(i)}\rangle}{\langle Z_i,z_i \rangle}$ is reduced. Let $T$ be the set of all nonzero multifractions in $\Swap$. By Lemma \ref{lem: ppty of multifraction}, $T$ is also the set of balance fractions of monomials. Also, by writing $N/D\in T$ in its reduced expression, we have
\[ 
N/D= \frac{\prod_{i=1}^s (\langle X_i,x_i \rangle)^{n_i}}{\prod_{j=1}^l (\langle Y_j,y_j \rangle)^{m_j}} \] 
where $X_i\neq x_i, Y_j\neq y_j, \langle X_i,x_i \rangle \neq \langle Y_j,y_j \rangle$, for all $i,j$, and we will usually write elements of $T$ in this reduced form. 

The next lemma shows that the set $T$ is an $\R[\GG,c]$-basis for $\Swap$, which we will use to construct the inverse $\R[\GG,c]$-linear map from $\Swap$ to $\B/I$.   
\begin{lemma} \label{lem: basis for Swap}
    The set $T$ is an $\R[\GG,c]$-basis for $\Swap$.    
\end{lemma}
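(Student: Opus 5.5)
We must show two things: that $T$ spans $\Swap$ as an $\R[\GG,c]$-module, and that $T$ is linearly independent over $\R[\GG,c]$. Recall $\Swap=\A^*[L_\GG,c]$, with $g\in\GG$ acting as multiplication by $l_g$. Here the $l_g$ and $c$ are formal polynomial variables over $\A^*$.

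\emph{Spanning.} Any element of $\Swap$ is a polynomial in the $l_g$ and $c$ with coefficients in $\A^*$. By Lemma \ref{lem: ppty of multifraction}(1), $\A^*$ is the $\R$-span of the multifractions. The zero multifractions contribute nothing, so $\A^*$ is the $\R$-span of $T$. Thus every element is a finite sum $\sum_k p_k(l_g,c)\,t_k$ with $t_k\in T$ and $p_k\in\R[\GG,c]$, which gives spanning.

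\emph{Independence.} Suppose $\sum_{k=1}^m p_k t_k=0$ with distinct $t_k\in T$ and $p_k\in\R[\GG,c]$. Since $\Swap=\A^*[L_\GG,c]$ is a polynomial ring over $\A^*$ in the algebraically independent variables $l_g$ and $c$, we expand each $p_k$ in monomials of the $l_g$ and $c$. Comparing coefficients of each monomial reduces the claim to $\R$-linear independence of $T$ inside $\A^*\subset F(\A)$.

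For this last point we use that $\A\cong\R[\GG]$ is a polynomial ring, so the Laurent monomials $\prod\langle x,y\rangle^{e(x,y)}$ with distinct finitely supported $e$ are $\R$-linearly independent in $F(\A)$. To see this, clear denominators in a purported relation $\sum_k a_k F_k=0$. Multiplying by a common monomial $M$ turns it into a relation among distinct ordinary monomials $MF_k$ in the polynomial ring $\R[\GG]$. Distinct $e$'s give distinct monomials, so all $a_k=0$.

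By Lemma \ref{lem: ppty of multifraction}(2), each element of $T$ is a fraction of monomials. Distinct elements of $T$ are distinct elements of $F(\A)$, so they have distinct exponent functions $e$, by uniqueness of the reduced form. Hence $T$ is $\R$-linearly independent.

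The only point requiring care is justifying that the $l_g$ and $c$ are genuinely free over $\A^*$. The proof uses this only in the form "$\Swap$ is the polynomial algebra over $\A^*$", which is exactly how $\Swap$ is defined. The rest is routine.
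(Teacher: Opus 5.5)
Your proposal is correct and follows essentially the same route as the paper. You reduce to $\R$-linear independence of $T$ in $\A^*$ using the freeness of the $l_g$ and $c$ over $\A^*$, which the paper phrases as $\Swap\cong\R[\GG,c]\otimes_\R\A^*$, and then clear denominators by a common monomial so that distinct elements of $T$ become distinct monomials in the polynomial ring $\A\cong\R[\GG]$. The only cosmetic difference is that you argue distinctness via exponent functions, whereas the paper multiplies by $\prod_j K_j$ and checks directly that $K_sH_i=K_iH_s$ would force $t_i=t_s$.
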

\begin{proof}
    Note that $\Swap = \A^*[L_\GG,c] \cong \R[\GG,c]\otimes_\R \A^*$. Furthermore, if $T$ is an $\R$-basis for $\A^*$, then $1\otimes T = \{1\otimes t|t\in T\}$ is an $\R[\GG,c]$-basis for $\R[\GG,c]\otimes_\R \A^*$. Therefore, the identification between $\Swap$ and $\R[\GG,c]\otimes_\R \A^*$ shows that $T$ is an $\R[\GG,c]$-basis for $\Swap$.
   
    Hence, it suffices to show that $T$ is an $\R$-basis for $\A^*$. Since $\A^*$ is spanned by multifraction, by Lemma \ref{lem: ppty of multifraction}, $\A^*$ is spanned by $T$.
   
    For linear independence, suppose $\sum_i a_i t_i = 0$ with distinct $t_i$. Write $t_i = \frac{H_i}{K_i}$ in reduced form where $H_i$ and $K_i$ are monomials. Hence, multiplying by $\prod_{j}K_j$,
    \begin{align*}
        \sum_i a_i (\prod_{j\neq i} K_j) H_i = 0.
    \end{align*}
    We claim that $(\prod_{j\neq i} K_j) H_i \neq (\prod_{j\neq s} K_j) H_s$, for all $i\neq s$. Suppose, for a contradiction, that $(\prod_{j\neq i} K_j) H_i = (\prod_{j\neq s} K_j) H_s$ for some $i\neq s$. Canceling the common term $\prod_{j\neq i,s}K_j$ gives $K_sH_i = K_iH_s$, that is, $t_i = H_i/K_i = H_s/K_s = t_s$, contradicting $t_i \neq t_s$. Since distinct monomials in a polynomial ring are linearly independent, $a_i = 0$ for all $i$.
\end{proof}

The following lemma, which follows from a direct computation, decomposes multifractions into products of cross-ratios.

\begin{lemma} \label{lem:presentation of T multifraction}
    For all $[\vec{Z},\vec{z};\sigma] \in T$, let $\langle a,b\rangle\in \G$ be such that $a,b \neq Z_i,z_i$  for all $i$. Then,
    $$[\vec{Z},\vec{z};\sigma] = \prod_{i=1}^n \frac{\langle Z_i,z_{\sigma(i)} \rangle,\langle a,b \rangle}{\langle Z_i,b \rangle,\langle a,z_{\sigma(i)} \rangle}\frac{ \langle Z_i,b \rangle\langle a,z_i \rangle}{ \langle Z_i,z_i \rangle,\langle a,b \rangle}.$$
\end{lemma}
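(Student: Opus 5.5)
The identity takes place in the field of fractions $F(\A)$, so I will verify it factor by factor; nothing beyond the definition of the multifraction is needed. The commas inside the numerators and denominators of the displayed formula are typographical and mean ordinary products of generators. The hypothesis $a,b\neq Z_i,z_i$ (together with $a\neq b$, since $\langle a,b\rangle\in\G$) is what ensures that every auxiliary factor is a nonzero element of $\A$. Concretely, the factors $\langle a,b\rangle$, $\langle Z_i,b\rangle$ and $\langle a,z_{\sigma(i)}\rangle$ are all nonzero, so the right-hand side is a legitimate element of $F(\A)$. Since $\A\cong\R[\G]$ is an integral domain, cancellation is valid.

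For each fixed $i$, I multiply the two cross-ratios. The factor $\langle a,b\rangle$ appears once in a numerator and once in a denominator, and likewise $\langle Z_i,b\rangle$, so both cancel. This leaves
\[
\frac{\langle Z_i,z_{\sigma(i)}\rangle\,\langle a,z_i\rangle}{\langle Z_i,z_i\rangle\,\langle a,z_{\sigma(i)}\rangle}.
\]
Taking the product over $i=1,\dots,n$ splits it into two pieces:
\[
\prod_i \frac{\langle Z_i,z_{\sigma(i)}\rangle}{\langle Z_i,z_i\rangle}\cdot\frac{\prod_i\langle a,z_i\rangle}{\prod_i\langle a,z_{\sigma(i)}\rangle}.
\]
The second quotient equals $1$, because $\sigma$ is a permutation: reindexing by $j=\sigma(i)$ shows the numerator and denominator are the same product. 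The remaining factor is exactly $[\vec{Z},\vec{z};\sigma]$, as required.

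There is no real obstacle here. The only point requiring care is the nonvanishing of the auxiliary factors, which is why the lemma assumes $a,b$ avoid all $Z_i,z_i$ and why the identity is stated inside the field of fractions rather than in $\A$ itself.
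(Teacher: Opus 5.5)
Your proof is correct and is exactly the direct computation the paper has in mind; the paper gives no further detail. You cancel $\langle a,b\rangle$ and $\langle Z_i,b\rangle$ in each factor, observe that $\prod_i \langle a,z_i\rangle / \prod_i \langle a,z_{\sigma(i)}\rangle = 1$ because $\sigma$ is a permutation, and correctly use the hypotheses on $a,b$ and $Z_i\neq z_i$ to see that every denominator is nonzero in the integral domain $\A\cong\R[\G]$.
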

Lemma \ref{lem:presentation of T multifraction} gives a way to decompose a multifraction into a product of cross-ratios. Note that the reduced expression of a balance fraction of monomials is preferable, since it is unique, whereas there exist $(\vec{Z},\vec{z},\sigma)\neq (\vec{W},\vec{w},\tau)$ such that $[\vec{Z},\vec{z};\sigma] = [\vec{W},\vec{w};\sigma]$ with both expressions reduced. Therefore, in terms of the reduced expression of an element $N/D\in T$, we have the following observation.
\begin{lemma} \label{lem:presentation of T}
    For every $N/D \in T$ in its reduced expression, with 
    \begin{align*}
        N = \prod_{i=1}^s (\langle X_i,x_i \rangle)^{n_i}, \qquad D = \prod_{j=1}^l (\langle Y_j,y_j \rangle)^{m_j},
    \end{align*} 
    let $\langle a,b\rangle\in \G$ be such that $a,b \neq X_i,x_i,Y_j,y_j$  for all $i,j$. Then,
    $$\frac{\prod_{i=1}^s (\langle X_i,x_i \rangle)^{n_i}}{\prod_{j=1}^l (\langle Y_j,y_j \rangle)^{m_j}} = \prod_{i=1}^s \frac{(\langle X_i,x_i \rangle,\langle a,b \rangle)^{n_i}}{(\langle X_i,b \rangle,\langle a,x_i \rangle)^{n_i}}\prod_{j=1}^l\frac{ (\langle Y_j,b \rangle\langle a,y_j \rangle)^{m_j}}{ (\langle Y_j,y_j \rangle,\langle a,b \rangle)^{m_j}}.$$
\end{lemma}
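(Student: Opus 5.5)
The identity in Lemma \ref{lem:presentation of T} is an equality of fractions of monomials in the field $F(\A)$. I will check it factor by factor: each factor of $N$ and each factor of $D$ should contribute one pair of cross-ratios whose product telescopes back to that factor. The stray commas inside the displayed fractions are typographical and mean products, so the right-hand side has the form
\[
\prod_{i=1}^s \left(\frac{\langle X_i,x_i\rangle\langle a,b\rangle}{\langle X_i,b\rangle\langle a,x_i\rangle}\right)^{n_i}\prod_{j=1}^l\left(\frac{\langle Y_j,b\rangle\langle a,y_j\rangle}{\langle Y_j,y_j\rangle\langle a,b\rangle}\right)^{m_j}.
\]

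First I verify that every symbol appearing is a nonzero element of $\A$, so that these fractions make sense in $F(\A)$. The hypotheses $a\neq b$, $X_i\neq x_i$ and $Y_j\neq y_j$ handle $\langle a,b\rangle$, $\langle X_i,x_i\rangle$ and $\langle Y_j,y_j\rangle$. The genericity of $\langle a,b\rangle$ ensures that $\langle X_i,b\rangle$, $\langle a,x_i\rangle$, $\langle Y_j,b\rangle$ and $\langle a,y_j\rangle$ are nondegenerate. Since $\A\cong\R[\GG]$ is an integral domain, none of these vanish.

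Next I collect exponents. The factor $\langle a,b\rangle$ appears with total exponent $\sum_i n_i-\sum_j m_j$. The factors involving $b$ together give
\[
\prod_{j}\langle Y_j,b\rangle^{m_j}\Big/\prod_i\langle X_i,b\rangle^{n_i}.
\]
Grouping by the first coordinate $p$, the exponent of $\langle p,b\rangle$ equals $\sum_{Y_j=p}m_j-\sum_{X_i=p}n_i$. Because $N/D$ is reduced, its exponent function $e$ is positive exactly on the $\langle X_i,x_i\rangle$ (value $n_i$) and negative exactly on the $\langle Y_j,y_j\rangle$ (value $-m_j$). So this exponent is $-\sum_y e(p,y)$, which vanishes by the balance condition. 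The same argument using the second coordinate shows the factors $\langle a,q\rangle$ cancel. Summing the row-balance conditions over all $p$ gives $\sum_i n_i=\sum_j m_j$, so the $\langle a,b\rangle$ factors cancel as well.

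What remains is exactly $\prod_i\langle X_i,x_i\rangle^{n_i}/\prod_j\langle Y_j,y_j\rangle^{m_j}=N/D$. The only real subtlety is the bookkeeping that turns the balance condition on $e$ into the vanishing of these auxiliary exponents, and this relies on using the reduced form so that the $n_i$ and $m_j$ are precisely the positive and negative parts of $e$. Alternatively, the lemma follows from Lemma \ref{lem:presentation of T multifraction} applied to the representation $N/D=[\vec Y,\vec y;\sigma]$ given by Lemma \ref{lem: ppty of multifraction}(3), after regrouping repeated factors. I would present the direct exponent count, since it avoids choosing $\sigma$.
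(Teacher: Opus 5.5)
Your proof is correct, but your main argument takes a different route from the paper's; the ``alternative'' in your last paragraph is exactly what the paper does.

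The paper first chooses, via Lemma~\ref{lem: ppty of multifraction}(3), a reduced multifraction $[\vec Z,\vec z;\sigma]=N/D$. It notes that the factors $\langle Z_i,z_{\sigma(i)}\rangle$ and $\langle Z_i,z_i\rangle$ run through the factors of $N$ and $D$ with multiplicity, and then regroups the product of Lemma~\ref{lem:presentation of T multifraction}. In that product the cancellation is term by term. Each $\langle Z_i,b\rangle$ cancels within its own factor, because the numerator factor $\langle Z_i,z_{\sigma(i)}\rangle$ and the denominator factor $\langle Z_i,z_i\rangle$ share the first endpoint. The $\langle a,\cdot\rangle$ factors cancel because $\sigma$ is a permutation. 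So the balance condition is encoded once and for all in $\sigma$.

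You skip $\sigma$ and use balance directly: row balance removes the $\langle p,b\rangle$, column balance removes the $\langle a,q\rangle$, and summing the row conditions removes $\langle a,b\rangle$.

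\begin{itemize}
\item Your version is self-contained and shows exactly which balance condition drives each cancellation. It also shows that reducedness is not needed for the identity itself. The exponent function is additive, so your count goes through for any presentation of a balanced fraction as a quotient of monomials. Reducedness matters only so that $\psi$ is well defined on $T$.
\item The paper's version is essentially free given the earlier lemmas. It also keeps the decomposition in the $\sigma$-indexed form that the proof of Lemma~\ref{lem: independent of a,b} then manipulates cycle by cycle.
\end{itemize}
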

\begin{proof}
    We first choose a reduced expression $[\vec{Z},\vec{z};\sigma] = \frac{\prod_{i=1}^s (\langle X_i,x_i \rangle)^{n_i}}{\prod_{j=1}^l (\langle Y_j,y_j \rangle)^{m_j}}$. Since this expression is reduced, as $i$ runs from $1$ to $n$, the factors $\langle Z_i,z_{\sigma(i)}\rangle$ runs through the factors of $N$ and $\langle Z_i,z_i\rangle$ runs through the factors of $D$, counted with multiplicity. Regrouping the product in Lemma \ref{lem:presentation of T multifraction} gives the formula.
\end{proof}

Let $\psi:T \to \B/I$ be the map given by $$\frac{\prod_{i=1}^s (\langle X_i,x_i \rangle)^{n_i}}{\prod_{j=1}^l (\langle Y_j,y_j \rangle)^{m_j}} \mapsto \prod_{i=1}^s\lceil \langle X_i, b\rangle, \langle a,x_i\rangle \rceil^{n_i} \prod_{j=1}^l\lceil \langle Y_j, y_j\rangle,\langle a , b\rangle \rceil^{m_j}$$ for any $\langle a,b \rangle \in \G$ with $a,b \neq X_i,x_i,Y_j,y_j$ for all $i,j$, where $\frac{\prod_{i=1}^s (\langle X_i,x_i \rangle)^{n_i}}{\prod_{j=1}^l (\langle Y_j,y_j \rangle)^{m_j}}$ is the reduced expression. 

\begin{lemma} \label{lem: independent of a,b}
    The map $\psi$ is well-defined, i.e. independent of the choice of $a,b$.
\end{lemma}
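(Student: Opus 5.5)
The plan is to compare the images of a fixed reduced expression $N/D\in T$ under two admissible auxiliary geodesics $\langle a,b\rangle$ and $\langle a',b'\rangle$. I would move from one to the other by changing a single endpoint at a time: first $\langle a,b\rangle\to\langle a',b\rangle$, then $\langle a',b\rangle\to\langle a',b'\rangle$. If an intermediate pair is degenerate, for instance $a'=b$, or is not admissible, I would first pass through a third generic geodesic $\langle a'',b''\rangle$. Such a geodesic exists because only finitely many points of $S^1$ are excluded. By the mirror symmetry between relations \ref{rel 3} and \ref{rel 4}, it then suffices to treat the move $a\to a'$ with $b$ fixed.

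\textbf{Key local identity.} For the denominator factors, I would show in $\B/I$ that
\[
\lceil \langle Y,y\rangle,\langle a,b\rangle\rceil=\lceil \langle Y,y\rangle,\langle a',b\rangle\rceil\,\lceil \langle a',y\rangle,\langle a,b\rangle\rceil .
\]
First apply relation \ref{rel 5} with the letters $X=a'$, $Y=y$, $S=b$, $T=Y$. This identifies the product $\lceil\langle a',y\rangle,\langle a,b\rangle\rceil\lceil\langle a',b\rangle,\langle Y,y\rangle\rceil$ with the triangle $\lceil\langle a',b\rangle,\langle a,b\rangle,\langle Y,y\rangle\rceil$. Relation \ref{rel 4} then collapses the two consecutive entries with common second slot $b$, leaving $\lceil\langle a,b\rangle,\langle Y,y\rangle\rceil$. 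The identity follows after using the cyclic symmetry of $2$-gons. I would check the distinctness hypotheses of the relations against the genericity of $a,a',b$; the degenerate cases $Y=a'$ or $y=b$ cannot occur.

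For the numerator factors, the analogous computation with relation \ref{rel 3}, together with relation \ref{rel 5} in its other orientation, should give
\[
\lceil \langle X,b\rangle,\langle a',x\rangle\rceil\,\lceil \langle a',x\rangle,\langle a,b\rangle\rceil=\lceil \langle X,b\rangle,\langle a,x\rangle\rceil .
\]
I would confirm this by applying $\pi$ and comparing cross-ratios as in the heuristic section, and then derive it from the generators. In both identities the only correction factors are of the form $\lceil\langle a',p\rangle,\langle a,b\rangle\rceil$, where $p$ ranges over second slots.

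\textbf{Global cancellation.} Substituting these identities into the defining product for $\psi$ with respect to $\langle a,b\rangle$, each denominator factor $\langle Y_j,y_j\rangle$ contributes $\lceil\langle a',y_j\rangle,\langle a,b\rangle\rceil^{m_j}$. Each numerator factor $\langle X_i,x_i\rangle$ contributes the same type of element $\lceil\langle a',x_i\rangle,\langle a,b\rangle\rceil^{n_i}$, but on the opposite side of the equation. The balance condition (Lemma \ref{lem: ppty of multifraction}, applied to the second slot) says that the multisets $\{x_i^{(n_i)}\}$ and $\{y_j^{(m_j)}\}$ coincide. Hence the two collections of correction factors are equal. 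To avoid needing inverses, I would set up the equality multiplicatively: multiply the $\langle a,b\rangle$-expression by $\prod_i \lceil\langle a',x_i\rangle,\langle a,b\rangle\rceil^{n_i}$, rewrite it, and recognise the $\langle a',b\rangle$-expression times the same product.

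\textbf{Main obstacle.} The delicate point is that this argument only proves equality after multiplication by a common factor, and $\B/I$ is not obviously a domain. So I expect to need that $\lceil\langle a',p\rangle,\langle a,b\rangle\rceil$ is a unit in $\B/I$. I would try to show that $\lceil\langle a',p\rangle,\langle a,b\rangle\rceil\,\lceil\langle a,p\rangle,\langle a',b\rangle\rceil=1$. The approach is to use relation \ref{rel 5} to form a triangle, relations \ref{rel 3} and \ref{rel 4} to shrink it to a $2$-gon with two entries sharing a first slot, and relation \ref{rel 2} to conclude. Making this chain match the exact letter patterns of the relations, including the distinctness requirements, is where I expect most of the work.
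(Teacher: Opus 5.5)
Your strategy (changing one endpoint of $\langle a,b\rangle$ at a time, then using the balance condition) is different from the paper's and can be made to work. However, one of your two key identities is false, and your stated obstacle is a side effect of that error.

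\textbf{The error.} Your denominator identity is correct: it follows from relation 5 with letters $(X,Y,S,T)=(a',y,b,Y)$, then relation 4. Your displayed numerator identity is false. Write $C(p)=\lceil\langle a',p\rangle,\langle a,b\rangle\rceil$, so that $\pi(C(p))=\frac{\langle a',b\rangle\langle a,p\rangle}{\langle a',p\rangle\langle a,b\rangle}$. Applying $\pi$ to your claimed identity, the left side becomes $\pi(\lceil\langle X,b\rangle,\langle a,x\rangle\rceil)\,\pi(C(x))^{2}$, not $\pi(\lceil\langle X,b\rangle,\langle a,x\rangle\rceil)$. Since $I\subseteq\ker\pi$, the identity cannot hold in $\B/I$.

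Used as written, it also breaks your global step. Substituting both of your identities into the $\langle a,b\rangle$-expression puts every correction factor on the same side, giving $\psi_{a,b}=\psi_{a',b}\prod_i C(x_i)^{n_i}\prod_j C(y_j)^{m_j}$ instead of a cancellation.

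The correct identity has $a$ and $a'$ interchanged. Apply relation 5 with letters $(X,Y,S,T)=(a,b,x,X)$ to $\lceil\langle a,b\rangle,\langle a',x\rangle\rceil\,\lceil\langle a,x\rangle,\langle X,b\rangle\rceil$, then relation 4 with letters $(X,Y,T)=(a,x,a')$. This gives
\[
\lceil\langle X,b\rangle,\langle a,x\rangle\rceil\,C(x)=\lceil\langle X,b\rangle,\langle a',x\rangle\rceil .
\]

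\textbf{What the correction buys.} With this identity, your obstacle about units disappears. Let $P=\prod_i\lceil\langle X_i,b\rangle,\langle a,x_i\rangle\rceil^{n_i}\prod_j\lceil\langle Y_j,y_j\rangle,\langle a',b\rangle\rceil^{m_j}$.
\begin{itemize}
\item The denominator identity rewrites $\psi_{a,b}(N/D)$ as $P\prod_j C(y_j)^{m_j}$.
\item The corrected numerator identity rewrites $\psi_{a',b}(N/D)$ as $P\prod_i C(x_i)^{n_i}$.
\end{itemize}
These are equal by balance in the second slot, so you never cancel anything in $\B/I$. Your unit claim is true (via relations 5, 4, 2), but you do not need it.

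The move $b\to b'$ is equally direct, using relations 5 and 3. Set $E(p)=\lceil\langle p,b'\rangle,\langle a,b\rangle\rceil$. Then
\[
\lceil\langle Y,y\rangle,\langle a,b\rangle\rceil=\lceil\langle Y,y\rangle,\langle a,b'\rangle\rceil\,E(Y), \qquad \lceil\langle X,b\rangle,\langle a,x\rangle\rceil\,E(X)=\lceil\langle X,b'\rangle,\langle a,x\rangle\rceil,
\]
and balance in the first slot finishes. This is safer than your mirror-symmetry reduction. That reduction is valid only because these derivations avoid relation 2, whose mirror image is not one of the listed generators.

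\textbf{Comparison with the paper.} Once repaired, your argument is a genuinely different proof. The paper fixes one reduced representation $[\vec Z,\vec z;\sigma]$ and splits $\sigma$ into cycles. It then uses relations 5, 3, 4 and an induction to collapse $\psi_{a,b}(N/D)$ to $\prod_{\text{cycles}}\lceil\langle Z_{c(1)},z_{c(1)}\rangle,\dots\rceil$, which visibly does not involve $a,b$.

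Your route needs only two local base-point-change identities per endpoint, plus the balance condition of Lemma~\ref{lem: ppty of multifraction}. It requires no choice of $\sigma$ and no induction. In exchange, the paper's proof produces an explicit $a,b$-free value of $\psi$ on multifractions, which it reuses in Lemma~\ref{lem: generators} and in the proof of the main theorem.
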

\begin{proof}
    Let $N/D\in T$ be in reduced form, and $\langle a,b\rangle,\langle a',b'\rangle \in \G$ satisfy the condition above. By Lemma \ref{lem: ppty of multifraction}, write $N/D = [\vec{Z},\vec{z};\sigma]$ as a reduced expression. Assume that $\sigma = (1,2,\dots,n)$ is the standard $n$-cycle. Then,
    \begin{align*}
        \psi(N/D) &= \prod_{i=1}^n \lceil \langle Z_i,b \rangle, \langle a,z_{i+1} \rangle \rceil \lceil \langle Z_i,z_i \rangle, \langle a,b \rangle \rceil \\
        &\stackrel{\eqref{rel 5}}{=} \prod_{i=1}^n \lceil \langle Z_i,z_i \rangle,\langle a,z_{i+1} \rangle, \langle a,b \rangle \rceil \\
        &= \prod_{i=1}^n \lceil \langle a,b \rangle, \langle Z_i,z_i \rangle,\langle a,z_{i+1} \rangle \rceil \\
        &\stackrel{\eqref{rel 3}}{=} \prod_{i=1}^n \lceil \langle Z_i,z_i \rangle,\langle a,z_{i+1} \rangle \rceil.
    \end{align*}
    If $n = 2$,
    \begin{align*}
        \lceil \langle Z_1,z_1 \rangle,\langle a,z_{2} \rangle \rceil\lceil \langle Z_2,z_2 \rangle,\langle a,z_{1} \rangle \rceil
        &= \lceil \langle a,z_{2} \rangle, \langle Z_1,z_1 \rangle \rceil \lceil \langle a,z_{1} \rangle , \langle Z_2,z_2 \rangle \rceil  \\
        &\stackrel{\eqref{rel 5}}{=} \lceil \langle a,z_1 \rangle,  \langle Z_1,z_1 \rangle, \langle Z_2,z_2 \rangle \rceil.
    \end{align*}
    Hence,  
    \[ \lceil \langle a,z_{1} \rangle,  \langle Z_1,z_1 \rangle, \langle Z_2,z_2 \rangle \rceil
    \stackrel{\eqref{rel 4}}{=}\lceil \langle Z_1,z_1 \rangle,\langle Z_2,z_{2} \rangle \rceil. \]
    Note that the final expression is independent of the points $a,b$. If $n>2$, by induction and relations \ref{rel 5} and \ref{rel 4},
    \begin{align*}
        \prod_{i=1}^n \lceil \langle Z_i,z_i \rangle,\langle a,z_{i+1} \rangle \rceil &=
        \lceil \langle a,z_n \rangle,\langle Z_{1},z_{1} \rangle,\dots, \langle Z_{n-1},z_{n-1} \rangle \rceil \lceil \langle a,z_{1} \rangle,\langle Z_n,z_n \rangle \rceil \\
        &\stackrel{\eqref{rel 5}}{=} \lceil \langle a,z_1\rangle,\langle Z_1,z_1 \rangle, \dots, \langle Z_{n-1},z_{n-1} \rangle, \langle Z_n,z_{n} \rangle \rceil \\
        &\stackrel{\eqref{rel 4}}{=} \lceil \langle Z_1,z_1 \rangle, \dots, \langle Z_{n-1},z_{n-1} \rangle, \langle Z_n,z_{n} \rangle \rceil.
    \end{align*}
   
    For a general permutation $\sigma$, we decompose $\sigma$ into disjoint cycles $c_i$, i.e. $\sigma = c_1\dots c_k$. Then $[\vec{Z},\vec{z};\sigma] = \prod_{i=1}^k[\vec{Z}_{c_i},\vec{z}_{c_i};(1,2,\dots,n_i)]$, where $\vec{Z}_{c_i} = (Z_{c_i(1)}, Z_{c_i(2)},\dots,Z_{c_i(n_i)} )$ and $\vec{z}_{c_i}$ is defined similarly. Therefore,
    \begin{align*}
        \psi\left(\prod_{i=1}^k[\vec{Z}_{c_i},\vec{z}_{c_i};(1,2,\dots,n_i)]\right) =
        \prod_{i=1}^k \left(\prod_{j=1}^{n_i}\lceil \langle Z_{c_i(j)},b \rangle, \langle a,z_{{c_i(j+1)}} \rangle \rceil \lceil \langle Z_{c_i(j)},z_{c_i(j)} \rangle, \langle a,b \rangle \rceil\right).
    \end{align*}
 Notice that the term on the right-hand side now corresponds to the previous case of the standard $n$-cycle.
\end{proof}

Since $T$ is an $\R[\GG,c]$-basis for $\Swap$, we can extend $\psi$ uniquely to an $\R[\GG,c]$-module homomorphism $\Psi:\Swap \to \B/I$.
\begin{lemma} \label{lem: algebra map}
    The map $\Psi$ is an $\R$-algebra map.
\end{lemma}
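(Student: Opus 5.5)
Since $\Psi$ is by construction $\R[\GG,c]$-linear, and $\Swap$ is spanned over $\R[\GG,c]$ by the basis $T$ (Lemma \ref{lem: basis for Swap}), the plan is to reduce multiplicativity to basis elements. Write general elements as $\sum_i p_i t_i$ and $\sum_j q_j s_j$ with $p_i,q_j\in\R[\GG,c]$ and $t_i,s_j\in T$. The product is $\sum_{i,j}p_iq_j\,t_is_j$. By Lemma \ref{lem: ppty of multifraction}(1) each $t_is_j$ is again a nonzero multifraction, hence lies in $T$, since $F(\A)$ is a field. So it suffices to prove two things: $\psi(1)=1$, and $\psi(t s)=\psi(t)\psi(s)$ for all $t,s\in T$. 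The unit is clear: the reduced expression of $1$ is the empty product, so $\psi(1)=1$.

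For multiplicativity on $T$, fix $t=N/D$ and $s=N'/D'$ in reduced form. Choose a single auxiliary geodesic $\langle a,b\rangle$ whose endpoints avoid every point appearing in $N,D,N',D'$. This is possible since only finitely many points are excluded, and Lemma \ref{lem: independent of a,b} lets us use the same $\langle a,b\rangle$ for $t$, $s$ and $ts$. The issue is that $ts$ is not reduced as the formal product $NN'/DD'$: common factors $\langle X,x\rangle$ appearing in both $N$ (or $N'$) and $D'$ (or $D$) cancel. The product $\psi(t)\psi(s)$ is the formal expression
\[\prod \lceil \langle X,b\rangle,\langle a,x\rangle\rceil^{\pm}\prod\lceil\langle Y,y\rangle,\langle a,b\rangle\rceil^{\pm}\]
built from the unreduced numerator and denominator. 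So the key step is a cancellation lemma: for every $\langle X,x\rangle\in\GG$ with $X,x\neq a,b$,
\[\lceil \langle X,b\rangle,\langle a,x\rangle\rceil\,\lceil\langle X,x\rangle,\langle a,b\rangle\rceil = 1 \quad\text{in } \B/I.\]
Granting this, one cancels each such pair in $\psi(t)\psi(s)$ one at a time, and what remains is exactly the defining formula for $\psi$ on the reduced expression of $ts$.

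To prove the cancellation lemma, I would use relation \eqref{rel 5} with the roles $\langle X,Y\rangle\to\langle X,x\rangle$, $\langle X,S\rangle\to\langle X,b\rangle$ and $\langle T,Y\rangle\to\langle a,x\rangle$. This should fuse the product into the three-term $\lceil \langle X,b\rangle,\langle a,b\rangle,\langle a,x\rangle\rceil$. Cyclically rotating and applying \eqref{rel 4} or \eqref{rel 3}, the consecutive pair $\langle X,b\rangle,\langle a,b\rangle$ (sharing the second slot $b$) collapses, giving the two-term $\lceil\langle a,b\rangle,\langle a,x\rangle\rceil$. Relation \eqref{rel 2} then sets this equal to $1$. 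This mirrors the manipulations in the proof of Lemma \ref{lem: independent of a,b}, and one checks that all letters involved are distinct, as the relations require.

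The main obstacle is bookkeeping rather than ideas. First, the letters required to be distinct in relations \eqref{rel 2}--\eqref{rel 5} must actually be distinct, which is where genericity of $a,b$ is used. Second, one must confirm that after all cancellations the surviving exponents match the reduced form of $ts$ exactly, including multiplicities, when a factor occurs with higher power on one side. Since the cancellation lemma lets one remove one numerator–denominator pair at a time, an induction on the total number of cancelled factors handles the multiplicities.
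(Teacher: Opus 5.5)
Your proposal is correct and is essentially the paper's proof. Like the paper, you reduce to $\psi(xy)=\psi(x)\psi(y)$ on $T$ using one generic $\langle a,b\rangle$ for all three elements, and show that each cancelled numerator--denominator pair $\lceil\langle X,b\rangle,\langle a,x\rangle\rceil\lceil\langle X,x\rangle,\langle a,b\rangle\rceil$ equals $1$ via relations \eqref{rel 5}, \eqref{rel 4}, \eqref{rel 2} in that order. The only cosmetic difference is the role assignment in \eqref{rel 5}: you fuse to $\lceil\langle X,b\rangle,\langle a,b\rangle,\langle a,x\rangle\rceil$, whereas the paper fuses to $\lceil\langle a,x\rangle,\langle X,x\rangle,\langle X,b\rangle\rceil$, and in either case \eqref{rel 4} applies directly with no cyclic rotation needed.
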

\begin{proof}
    It suffices to show that $\psi(xy) = \psi(x)\psi(y)$ for all $x,y \in T$. Suppose $x = \frac{C}{D} \frac{M}{N}, y = \frac{E}{F} \frac{N}{M}$, where
    \begin{align*}
        C = \prod_{i=1}^{n_1} \langle \hat{c}_i,c_i\rangle & \qquad D = \prod_{i=1}^{n_2} \langle \hat{d}_i,d_i\rangle \\
        E = \prod_{i=1}^{n_3} \langle \hat{e}_i,e_i\rangle & \qquad F = \prod_{i=1}^{n_4} \langle \hat{f}_i,f_i\rangle \\        M = \prod_{i=1}^{s} \langle \hat{m}_i,m_i\rangle & \qquad N = \prod_{i=1}^{t} \langle \hat{n}_i,n_i\rangle
    \end{align*}
    such that $xy = \frac{CE}{DF}$ has no common factor in both the numerator and denominator. Choose $\langle a,b \rangle\in \G$ such that $a$ and $b$ are not equal to the endpoints of the geodesics in $C,D,E,F,M,N$. Then,
    \begin{align*}
        \psi(x) \psi(y) &= \prod_{i=1}^{n_1} \lceil \langle a, c_i\rangle , \langle \hat c_i, b\rangle  \rceil
        \prod_{i=1}^{n_2} \lceil \langle \hat d_i, d_i \rangle,\langle a,b \rangle \rceil
        \prod_{j=1}^{n_3} \lceil \langle a, e_j\rangle , \langle \hat e_j, b\rangle  \rceil
        \prod_{j=1}^{n_4}\lceil \langle \hat f_j, f_j \rangle,\langle a,b \rangle \rceil \\
        &\prod_{k=1}^s \lceil \langle a,m_k  \rangle, \langle \hat m_k,b\rangle \rceil \lceil \langle \hat m_k,m_k  \rangle, \langle a,b\rangle \rceil \prod_{l=1}^t \lceil \langle a,n_l  \rangle, \langle \hat n_l,b\rangle \rceil \lceil \langle \hat n_l, n_l \rangle, \langle a,b\rangle \rceil \\
        &= \psi(xy) \prod_{k=1}^s \lceil \langle a,b\rangle, \langle \hat m_k,m_k  \rangle \rceil \lceil \langle a,m_k  \rangle, \langle \hat m_k,b\rangle \rceil
        \prod_{l=1}^t \lceil \langle a,b\rangle, \langle \hat n_l,n_l  \rangle \rceil \lceil\langle a,n_l  \rangle, \langle \hat n_l,b\rangle \rceil \\
        &\stackrel{\eqref{rel 5}}{=} \psi(xy)\prod_{k=1}^s \lceil \langle a,m_k  \rangle,\langle \hat m_k,m_k  \rangle, \langle \hat m_k,b\rangle \rceil
        \prod_{l=1}^t \lceil \langle a,n_l  \rangle,\langle \hat n_l,n_l  \rangle, \langle \hat n_l,b\rangle \rceil \\
        &\stackrel{\eqref{rel 4}}{=} \psi(xy)\prod_{k=1}^s \lceil \langle \hat m_k,m_k  \rangle, \langle \hat m_k,b\rangle \rceil \prod_{l=1}^t \lceil \langle \hat n_l,n_l  \rangle, \langle \hat n_l,b\rangle \rceil \\
        &\stackrel{\eqref{rel 2}}{=} \psi(xy).
    \end{align*}
\end{proof}

\begin{lemma}\label{lem: generators}
    The algebra $\Swap$ is generated by $\frac{\langle X,y\rangle \langle Y,x \rangle}{\langle X,x\rangle, \langle Y,y \rangle}$, $l_g$ and $c$, and $\B/I$ is generated by $\lceil \langle X,x\rangle, \langle Y,y \rangle \rceil$, $g$ and $c$ as algebras, where $X, x , Y,y$ are pairwise distinct in both cases.
\end{lemma}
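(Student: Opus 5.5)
For $\Swap$ I would argue from Lemma \ref{lem: basis for Swap} and Lemma \ref{lem:presentation of T}. As an $\R[\GG,c]$-module, $\Swap$ is spanned by $T$, so as an $\R$-algebra it is generated by $l_g$, $c$ and $T$. It then suffices to write each $N/D\in T$ as a product of cross-ratios. Fix $\langle a,b\rangle$ with $a,b$ distinct from every endpoint appearing in $N$ and $D$. Lemma \ref{lem:presentation of T} writes $N/D$ as a product of factors $\frac{\langle X_i,x_i\rangle\langle a,b\rangle}{\langle X_i,b\rangle\langle a,x_i\rangle}$ and $\frac{\langle Y_j,b\rangle\langle a,y_j\rangle}{\langle Y_j,y_j\rangle\langle a,b\rangle}$. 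The second factor is literally $\frac{\langle X,y\rangle\langle Y,x\rangle}{\langle X,x\rangle\langle Y,y\rangle}$ with $(X,x,Y,y)=(Y_j,y_j,a,b)$. The first factor is the reciprocal of such a cross-ratio, so I need an extra identity. Because $\langle a,b\rangle$ can be chosen freely, I would avoid reciprocals altogether: $\frac{\langle X,x\rangle\langle a,b\rangle}{\langle X,b\rangle\langle a,x\rangle}$ is itself the cross-ratio $\frac{\langle X,y'\rangle\langle Y',x'\rangle}{\langle X,x'\rangle\langle Y',y'\rangle}$ with $x'=b$, $Y'=a$, $y'=x$. Both denominator geodesics $\langle X,b\rangle$ and $\langle a,x\rangle$ are genuine. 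The four points $X,b,a,x$ are pairwise distinct because $X\neq x$ (a reduced factor) and $a,b$ avoid all endpoints. So every factor is a cross-ratio with pairwise distinct points, and $\Swap$ is generated as claimed.

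For $\B/I$ I would use the algebra generators $\GG$, $c$ and all classes $\lceil g_1,\dots,g_k\rceil$. I would show by induction on $k$ that each $\lceil g_1,\dots,g_k\rceil$ is congruent mod $I$ to a polynomial in $2$-gons $\lceil \langle X,x\rangle,\langle Y,y\rangle\rceil$ with pairwise distinct $X,x,Y,y$.

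For $k\geq 3$, write $g_1=\langle X,Y\rangle$ and choose a fresh point $S$ distinct from all endpoints; note $X\neq Y$. Read backwards, relation \eqref{rel 5} gives
\[
\lceil \langle X,S\rangle,g_2,\dots,g_k,\langle T,Y\rangle\rceil \equiv \lceil \langle X,Y\rangle,g_2,\dots,g_k\rceil\,\lceil\langle X,S\rangle,\langle T,Y\rangle\rceil .
\]
This holds for a fresh $T$, but it lengthens rather than shortens the polygon, so I would use it in reverse form: \eqref{rel 5} and \eqref{rel 4} together split a polygon into a shorter one times a $2$-gon. Concretely, take $G=\lceil g_1,\dots,g_k\rceil$ with $g_k=\langle U,V\rangle$ and $g_1=\langle X,Y\rangle$, and pick fresh $a$. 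By \eqref{rel 4}, $G\equiv\lceil \langle a,V\rangle,g_1,\dots,g_k\rceil$, after cyclically rotating $\langle a,V\rangle$ to sit just before $g_k$. Regroup this as $\lceil \langle X',S'\rangle,\dots,\langle T',Y'\rangle\rceil$ in the pattern of \eqref{rel 5}. Then \eqref{rel 5} factors it as a $(k-1)$- or $k$-gon times a $2$-gon; this exactly mirrors the computation in the proof of Lemma \ref{lem: independent of a,b}, now run in reverse. A length count, possibly after a second fresh insertion using \eqref{rel 3}, should drop $k$ by one. The remaining $2$-gons $\lceil\langle X,x\rangle,\langle Y,y\rangle\rceil$ with coincident letters are handled by relations \eqref{rel 1} and \eqref{rel 2}, which reduce them to $0$ or $1$. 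The case $X=Y$, $x=y$ is excluded, since $\Delta_k$ is removed. If $x=Y$ but $X\neq y$, I would insert a fresh point via \eqref{rel 3} and \eqref{rel 5} to rewrite it as a product of distinct-point $2$-gons.

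\textbf{Main obstacle.} The hard step is making the polygon-shortening in $\B/I$ precise. Relations \eqref{rel 3}--\eqref{rel 5} only apply when the shared endpoints match in exactly the prescribed slots. I therefore need to check carefully that a fresh auxiliary point always allows the pattern to be realised, including the degenerate cases where adjacent $g_i$ share endpoints. I expect these to be handled case-by-case using \eqref{rel 1} and \eqref{rel 2}.
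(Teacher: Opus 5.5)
Your argument for $\Swap$ is correct and is the paper's: Lemma \ref{lem:presentation of T} writes each element of $T$ as a product of two kinds of factor. Your relabeling $(X,x,Y,y)=(X_i,b,a,x_i)$ correctly shows that the first kind is itself a cross-ratio with four distinct points.

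The $\B/I$ part has a genuine gap, because the reduction step you describe does not shorten anything.

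First, the insertion is misplaced. Relation 4 inserts $\langle a,V\rangle$ immediately \emph{before} $g_k=\langle U,V\rangle$, so the polygon is $\lceil\langle a,V\rangle,g_k,g_1,\dots,g_{k-1}\rceil$. Your $\lceil\langle a,V\rangle,g_1,\dots,g_k\rceil$ is not this polygon, since cyclic rotation does not change which entry follows which.

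More importantly, relation 5 always turns an $(m+2)$-gon into an $(m+1)$-gon times a $2$-gon. So after the insertion, with $g_{k-1}=\langle T',Y'\rangle$, you get
\[
G\equiv\lceil\langle a,Y'\rangle,g_k,g_1,\dots,g_{k-2}\rceil\,\lceil\langle a,V\rangle,g_{k-1}\rceil .
\]
This is a $k$-gon again, never a $(k-1)$-gon. Nor can relation 3 or 4 delete the edge $\langle a,Y'\rangle$. Relation 3 would need its predecessor $g_{k-2}$ to start at $a$, which a fresh $a$ rules out. Relation 4 would need its successor $g_k$ to end at $Y'$, which fails generically. So an induction on the polygon length $k$ cannot close, and the ``second fresh insertion using 3'' has no mechanism behind it.

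The missing idea, which is the paper's, is to keep the auxiliary edge and induct on the number of \emph{original} edges. Write $g_i=\langle Z_i,z_i\rangle$. Relation 5 gives
\[
\lceil\langle a,w\rangle,g_1,\dots,g_m\rceil=\lceil\langle a,z_m\rangle,g_1,\dots,g_{m-1}\rceil\,\lceil\langle a,w\rangle,g_m\rceil .
\]
Starting from $w=z_1$ (via relation 4) and iterating, this yields $\lceil g_1,\dots,g_n\rceil=\prod_i\lceil g_i,\langle a,z_{i+1}\rangle\rceil$. This is exactly the computation of Lemma \ref{lem: independent of a,b} read backwards.

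These $2$-gons can still have coincident points, for example when $z_{i+1}=z_i$. The paper therefore introduces a second fresh point $b$. Relations 3 and 5 give
\[
\lceil g_i,\langle a,z_{i+1}\rangle\rceil=\lceil\langle Z_i,b\rangle,\langle a,z_{i+1}\rangle\rceil\,\lceil\langle Z_i,z_i\rangle,\langle a,b\rangle\rceil .
\]
The second factor always has four distinct points. The first fails to have them only if $Z_i=z_{i+1}$, and then it is $0$ by relation 1.

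That is how the paper disposes of coincident points in the final factors. Your case-by-case cleanup is, as written, also incomplete in two ways:
\begin{itemize}
\item A $2$-gon with a common second endpoint, $\lceil\langle X,y\rangle,\langle Y,y\rangle\rceil$, is an instance of neither relation 1 nor relation 2. It needs a separate argument, and the $b$-split above avoids producing it.
\item The case $x=Y$, $X\neq y$ that you single out needs no insertion: it is relation 1 directly after rotation, as $\lceil\langle Y,y\rangle,\langle X,Y\rangle\rceil$.
\end{itemize}
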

\begin{proof}
    Lemma \ref{lem:presentation of T} shows the former. Let $\lceil  \langle Z_1,z_1\rangle,\dots,\langle Z_n,z_n\rangle \rceil \in \B/I$. By an argument similar to that of Lemma \ref{lem: independent of a,b}, $$\lceil\langle Z_1,z_1\rangle,\dots,\langle Z_n,z_n\rangle \rceil = \prod_{i=1}^n \lceil \langle Z_i,b \rangle, \langle a,z_{i+1} \rangle \rceil \lceil \langle Z_i,z_i \rangle, \langle a,b \rangle \rceil$$
    for $\langle a, b \rangle \in \G$ with  $a,b\neq Z_j,z_j$ for all $j=1,\cdots,n$.
    Note that $Z_i\neq z_i$ for all $i=1,\dots,n$. If $Z_i = z_{i+1}$ for some $i$, then by relation \ref{rel 1}, $\lceil \langle Z_i,b \rangle, \langle a,z_{i+1} \rangle \rceil = 0$.
\end{proof}

\begin{proof}[Proof of Theorem \ref{thm: relationship between G and S}]
    Note that the generators of $I$ are in the kernel of $\pi$. This induces a map $\hat\pi:\B/I \to \Swap$. It suffices to show that $\Psi$ and $\hat \pi$ are inverses of one another on the generators in Lemma \ref{lem: generators}. For the generators $c, g, \lceil \langle X,x\rangle,\langle Y,y\rangle\rceil \in \B/I$ with $X,x,Y,y$ pairwise distinct, we have $\Psi(\hat \pi(c)) = \Psi(c) = c, \Psi(\hat \pi(g)) = \Psi(l_g) = g,$  and, by an argument similar to that in Lemma \ref{lem: independent of a,b}    
    \begin{align*}
        \Psi(\hat \pi(\lceil \langle X,x\rangle,\langle Y,y\rangle\rceil)) &= \Psi\left(\frac{\langle X,y \rangle\langle Y,x \rangle}{\langle X,x \rangle\langle Y,y \rangle}\right) \\
        &= \lceil \langle X,x\rangle,\langle a,b\rangle\rceil
        \lceil \langle Y,y\rangle,\langle a,b\rangle\rceil
        \lceil \langle a,y\rangle,\langle X,b\rangle\rceil
        \lceil \langle a,x\rangle,\langle Y,b\rangle\rceil\\
        &= \lceil \langle a,z_2\rangle,\langle Z_1,b\rangle\rceil
        \lceil \langle Z_1,z_1\rangle,\langle a,b\rangle\rceil
        \lceil \langle a,z_1\rangle,\langle Z_2,b\rangle\rceil
        \lceil \langle Z_2,z_2\rangle,\langle a,b\rangle\rceil \\
        &= \lceil \langle X,x\rangle,\langle Y,y\rangle\rceil
    \end{align*}
    where $(Z_1,Z_2) = (X,Y)$, $(z_1,z_2) = (x,y)$. Conversely, $\hat\pi(\Psi(c)) = c, \hat\pi(\Psi(l_g)) = \hat\pi(g) = l_g,$ and computation above shows $\hat\pi\circ\Psi$ fixes cross-ratios. Since $\hat \pi$ and $\Psi$ are algebra maps, $\hat\pi\circ\Psi$ is the identity map on $\Swap$.
\end{proof}

\printbibliography
\end{document}